\DeclareMathOperator*{\Res}{Res}
\theoremstyle{definition}
\theoremstyle{plain}
\newtheorem{Th}{Theorem}[section]
\newtheorem{Lem}{Lemma}[section]
\theoremstyle{remark}
\numberwithin{equation}{section}
\title{On certain kernel functions and shifted convolution sums of Hecke eigenvalues}
\author[1]{Youjun Wang} 
\date{}
\begin{document}
\maketitle
\begin{abstract}
	Let $j\geq 2$ be a given integer. Let $f$ be a normalized primitive holomorphic cusp form of even integral weight for the full modular group $\Gamma=SL(2,\mathbb{Z})$. Denote by $\lambda_{\text{sym}^{j}f}(n)$ the $n$th normalized coefficient of the Dirichlet expansion of the $j$th symmetric power $L$-function $L(s,\text{sym}^{j}f)$. In this paper, we are interested in the behavior of the shifted convolution sum involving $\lambda_{\text{sym}^{j}f}(n)$ with a weight function to be the $k$-full kernel function for any fixed integer $k\geq 2$.
\end{abstract}

\textbf{Keywords:} Fourier coefficients, symmetric power $L$-function, cusp form\\

\textbf{2020 Mathematics Subject Classification} 11F11 11F30 11F66
\section{Introduction}
Let $k\geq 2$ be any fixed integer. Then any integer $n\geq 1$ can be uniquely decomposed as $n=q(n)k(n)$, $(q(n),k(n))=1$, where $q(n)$ is $k$-free, and $k(n)$ is $k$-full ($k(n)$ is $k$-full if $p^{k}\mid k(n)$ whenever $p\mid k(n)$). A non-negative integer valued function $a(n)$ is called $k$-full kernel function if $a(n)=a(k(n))$ for all $n\geq 1$ and $a(n)\ll n^{\epsilon}$ for any $\epsilon>0$ by Ivi\'{c} and Tenenbaum \cite{IT}. It is to be noted that $k$-full kernel functions are not necessarily multiplicative.\\
\indent In 1987, Erd{\H{o}}s and Ivi\'{c} \cite{EI} studied the 2-full kernel function and obtained the following asymptotic formulas
\begin{align*}
&\sum_{n\leq x}a(n)d(n+1)=C_{1}x\log x+C_{2}x+O\left(x^{\frac{8}{9}+\epsilon}\right)\\
&\sum_{n\leq x}a(n)\omega(n+1)=D_{1}x\log\log x+D_{2}x+O\left(\frac{x}{\log x}\right),
\end{align*}
where $d(n)$ is the divisor function, and $\omega(n)$ is the number of different prime factors of $n$, and $C_{1}$, $D_{1}>0$, $C_{2}$, $D_{2}$ are constants that can be evaluated explicitly.\\
\indent For an even integer $\kappa\geq 2$, let $H_{\kappa}$ denote the set of all normalized primitive Hecke eigencusp forms of even integral weight $\kappa$ for the full modular group $\Gamma=SL(2,\mathbb{Z})$. For $f(z)\in H_{\kappa}$, it has the Fourier expansion at the cusp $\infty$
\[
f(z)=\sum_{n=1}^{\infty}\lambda_{f}(n)n^{\frac{\kappa-1}{2}}
e^{2\pi inz},\qquad   \Im(z)>0,
\]
where the coefficients $\lambda_{f}(n)$ are eigenvalues of Hecke operators $T_{n}$ with $\lambda_{f}(1)=1$. It's well known that $\lambda_{f}(n)$ is real and satisfies the multiplicative property
\begin{equation}\label{kecheng}
	\lambda_{f}(m)\lambda_{f}(n)=\sum_{d\mid(m,n)}\lambda_{f}
	\left(\frac{mn}{d^{2}}\right),
\end{equation}
for all  $m\geq 1$, $n\geq 1$. In 1974, Deligne \cite{Deligne} proved the Ramanujan--Petersson conjecture for holomorphic cusp forms
\begin{equation}\label{divisor}
	\vert\lambda_{f}(n)\vert\leq d(n),
\end{equation}
where $d(n)$ is the divisor function. By (\ref{divisor}), Deligne's bound is equivalent to the fact that there exist
$\alpha_{f}(p), \beta_{f}(p)\in \mathbb{C}$ satisfying
\begin{align}\label{eq1.1}
	\alpha_{f}(p)+\beta_{f}(p)=\lambda_{f}(p), \qquad
	\vert\alpha_{f}(p)\vert=\vert\beta_{f}(p)\vert=1=\alpha_{f}(p)\beta_{f}(p).
\end{align}
The Hecke $L$-function $L(s,f)$ associated to $f$ is defined as
\begin{align*}
	L(s,f)&=\sum_{n=1}^{\infty}\frac{\lambda_{f}(n)}{n^{s}}=\prod_{p}(1-\lambda_{f}(p)p^{-s}+p^{-2s})^{-1}\\
	&=\prod_{p}\left(1-\frac{\alpha_{f}(p)}{p^{s}}\right)^{-1}\left(1-\frac{\beta_{f}(p)}{p^{s}}\right)^{-1}, \quad \Re(s)>1,
\end{align*}
where $\alpha_{f}(p),\beta_{f}(p)$ are the local parameters satisfying (\ref{eq1.1}). The $j$th \text{sym}metric power $L$-function
associated with $f$ is defined as
\[
L(s,\text{sym}^{j}f)=\prod_{p}\prod_{m=0}^{j}(1-\alpha_{f}^{j-m}(p)\beta_{f}^{m}(p)p^{-s})^{-1},\quad \Re(s)>1.
\]
We may expand it into a Dirichlet series
\begin{align*}
	L(s,\text{sym}^{j}f)&=\sum_{n=1}^{\infty}\frac{\lambda_{\text{sym}^{j}f}(n)}{n^{s}} \notag \\
	&=\prod_{p}\left(1+\frac{\lambda_{\text{sym}^{j}f}(p)}{p^{s}}+\cdots+\frac{\lambda_{\text{sym}^{j}f}(p^{k})}{p^{ks}}+\cdots\right), \quad\Re(s)>1.
\end{align*}
Obviously $\lambda_{\text{sym}^{j}f}(n)$ is a real multiplicative function. For $j=1$, we have $L(s,\text{sym}^{1}f)=L(s,f)$. It is standard that
\[
\lambda_{f}(p^{j})=\lambda_{\text{sym}^{j}f}(p)=\frac{\alpha_{f}^{j+1}(p)-\beta_{f}^{j+1}(p)}{\alpha_{f}(p)-\beta_{f}(p)}=\sum_{m=0}^{j}\alpha_{f}^{j-m}(p)\beta_{f}^{m}(p).
\]
Let $\chi$ be a Dirichlet character modulo $q$. Then we can define the twisted $j$th \text{sym}metric power $L$-function by the Euler product representation with degree $j+1$
\begin{align*}
	L(s,\text{sym}^{j}f\otimes \chi)=&\prod_{p}\prod_{m=0}^{j}(1-\alpha_{f}^{j-m}(p)\beta_{f}^{m}(p)\chi(p)p^{-s})^
	{-1}\\
	=&\sum_{n=1}^{\infty}\frac{\lambda_{\text{sym}^{j}f}(n)\chi(n)}{n^{s}}, \quad \Re(s)>1.
\end{align*}
Shifted convolution sums with $GL(2)$ Fourier coefficients have been investigated extensively by many authors (see \cite{Blomer}, \cite{Holo} and \cite{Michel}), who have obtained many important results, such as equi-distribution quantum unique ergodicity and subconvexity.
In \cite{LvWang}, L\"{u} and Wang investigated the shifted convolution sums of squares of Fourier coefficients with $2$-full kernel function $a(n)$ and obtained an asymptotic formula for the sum
\[
\sum_{n\leq x}a(n)\lambda_{f}^{2}(n+1).
\]
Later, Venkatasubbareddy and Sankaranarayanan \cite{VenSan} considered the problem to higher moments, and generalized to $k$-full kernel functions. In this paper, we are interested in the shifted convolution sum
\[
\sum_{n\leq x}a(n)\lambda_{\textup{sym}^{j}f}^{2}(n+1),
\]
where $a(n)$ is $k$-full kernel functions, for any fixed integer $k\geq 2$.\\

First we state:
\begin{Th}\label{Fouriersum}
	Let $f\in H_{\kappa}$ and $q\geq 100$ be any integer. Then for any $\epsilon>0$ and $q\ll x^{\frac{2}{(j+1)^{2}}-\epsilon}$, we have
	\begin{align*}
		\sum_{n\leq x+1\atop n\equiv 1(q)}\lambda_{\textup{sym}^{j}f}^{2}(n)=c_{f,j}\frac{x}{q}+O\left(\frac{x^{1-\frac{2}{(j+1)^{2}}+\epsilon}q^{1+\epsilon}}{\phi(q)}\right),
	\end{align*}
where $c_{f,j}$ is the constant given by 
\[
c_{f,j}=\prod_{n=1}^{j}L(1,\textup{sym}^{2n}\otimes\chi_{0})H_{j}(1),
\]
and $H_{j}(1)\neq 0$.
\end{Th}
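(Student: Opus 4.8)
The plan is to detect the congruence $n\equiv 1\pmod q$ through the orthogonality relations for Dirichlet characters, writing
\[
\sum_{n\le x+1\atop n\equiv 1(q)}\lambda_{\textup{sym}^{j}f}^{2}(n)=\frac{1}{\phi(q)}\sum_{\chi\bmod q}\ \sum_{n\le x+1}\lambda_{\textup{sym}^{j}f}^{2}(n)\chi(n),
\]
and then to evaluate each inner sum by contour integration. The arithmetic input is the factorisation of the Dirichlet series of $\lambda_{\textup{sym}^{j}f}^{2}(n)\chi(n)$: since $\lambda_{\textup{sym}^{j}f}$ is multiplicative, comparing Euler factors and using the Rankin--Selberg identity $L(s,\textup{sym}^{j}f\times\textup{sym}^{j}f)=\zeta(s)\prod_{m=1}^{j}L(s,\textup{sym}^{2m}f)$ gives, for $\Re(s)>1$,
\[
\sum_{n=1}^{\infty}\frac{\lambda_{\textup{sym}^{j}f}^{2}(n)\chi(n)}{n^{s}}=L(s,\chi)\prod_{m=1}^{j}L(s,\textup{sym}^{2m}f\otimes\chi)\cdot H_{j}(s,\chi),
\]
where $H_{j}(s,\chi)$ is an absolutely convergent Euler product in $\Re(s)>\tfrac12$ with $H_{j}(s,\chi)\ll q^{\epsilon}$ there, reducing for $\chi=\chi_{0}$, up to the finitely many Euler factors at $p\mid q$, to the function $H_{j}(s)$ of the statement, $H_{j}(1)\neq 0$. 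Each $L(s,\textup{sym}^{2m}f\otimes\chi)$ is entire of degree $2m+1$, entirety coming from the cuspidal automorphy of $\textup{sym}^{2m}f$ on $GL(2m+1)$ (Newton--Thorne; for small $j$ the earlier work of Gelbart--Jacquet and Kim--Shahidi), while $L(s,\chi)$ is entire for $\chi\neq\chi_{0}$ and equals $\zeta(s)\prod_{p\mid q}(1-p^{-s})$ for $\chi=\chi_{0}$, with a simple pole at $s=1$ of residue $\phi(q)/q$. Thus the Dirichlet series is holomorphic in $\Re(s)>\tfrac12$ except for a simple pole at $s=1$ occurring only when $\chi=\chi_{0}$.

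Next I apply a truncated Perron formula to each inner sum and shift the contour across $\Re(s)=1$. For $\chi=\chi_{0}$ the simple pole at $s=1$ contributes
\[
\Res_{s=1}L(s,\chi_{0})\prod_{m=1}^{j}L(s,\textup{sym}^{2m}f\otimes\chi_{0})H_{j}(s,\chi_{0})\frac{(x+1)^{s}}{s}=\frac{\phi(q)}{q}c_{f,j}\,x+O(1),
\]
which after division by $\phi(q)$ yields the main term $c_{f,j}x/q$; for $\chi\neq\chi_{0}$ there is no residue, so the whole contribution is an error term. It remains to bound the shifted integrals, the horizontal connecting segments, and the Perron truncation error $\ll x^{1+\epsilon}/T$ (using Deligne's bound $\lambda_{\textup{sym}^{j}f}(n)\ll n^{\epsilon}$). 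On the shifted vertical line one invokes the functional equations and convexity bounds for $\prod_{m=1}^{j}L(s,\textup{sym}^{2m}f\otimes\chi)$ in the $t$- and $q$-aspects and optimises the truncation height $T$ and the abscissa of the shifted line; the resulting bookkeeping is governed by the degree $\sum_{m=1}^{j}(2m+1)=(j+1)^{2}-1$ of this product, equivalently by $\deg L(s,\textup{sym}^{j}f\times\textup{sym}^{j}f)=(j+1)^{2}$, and produces a saving of $x^{2/(j+1)^{2}}$ together with a power of the conductor that stays under control precisely when $q\ll x^{2/(j+1)^{2}-\epsilon}$. Summing over the $\phi(q)$ characters and dividing by $\phi(q)$ then gives the asserted asymptotic with error $\ll x^{1-2/(j+1)^{2}+\epsilon}q^{1+\epsilon}/\phi(q)$; the $q=1$ case of this estimate is the known unconditional asymptotics for $\sum_{n\le x}\lambda_{\textup{sym}^{j}f}^{2}(n)$, and the argument here is essentially that evaluation carried out with a character twist and with the $q$-dependence kept explicit.

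The routine parts are the Perron and contour bookkeeping, the passage from $(x+1)$ to $x$ in the main term, and the explicit verification that $H_{j}(s,\chi)$ has the claimed convergent Euler product (a finite local computation at each prime, with the primes dividing $q$ contributing only $q^{\epsilon}$). I expect the main obstacle to be the \emph{uniform} analytic control of the twisted symmetric power $L$-functions $L(s,\textup{sym}^{2m}f\otimes\chi)$: one needs the cuspidal automorphy of $\textup{sym}^{2m}f$ on $GL(2m+1)$, the functional equation of its twist by an arbitrary (possibly imprimitive) $\chi\bmod q$ with analytic conductor $\ll\bigl(q(1+|t|)\bigr)^{2m+1}$, and the attendant convexity bounds, all with explicit dependence on $q$, since every bit of the $q$-dependence in the final error term is tracked through this step. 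The rest of the argument then consists of assembling these automorphic inputs and propagating the $q$-dependence cleanly through the contour shift.
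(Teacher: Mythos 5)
Your overall architecture coincides with the paper's: orthogonality of characters modulo $q$, the factorization
$\sum_{n}\lambda_{\textup{sym}^{j}f}^{2}(n)\chi(n)n^{-s}=L(s,\chi)\prod_{m=1}^{j}L(s,\textup{sym}^{2m}f\otimes\chi)H_{j}(s)$
(obtained in the paper from the Hecke relation $\lambda_f^2(p^j)=1+\sum_{l\le j}\lambda_f(p^{2l})$, equivalently your Rankin--Selberg identity), Perron's formula, a shift to $\Re s=\tfrac12+\epsilon$, the residue at $s=1$ only for $\chi_0$, reduction of imprimitive characters to primitive ones, and the choice $T=x^{2/(j+1)^2}/q$. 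The gap is in the treatment of the shifted vertical line. You propose to bound it by \emph{pointwise} convexity bounds for the product $\prod_{m}L(s,\textup{sym}^{2m}f\otimes\chi)$ and to optimise $T$ and the abscissa. That cannot produce the exponent $\tfrac{2}{(j+1)^2}$: on $\Re s=\tfrac12$ the pointwise convexity bound for the degree-$(j+1)^2$ product is $(q(1+|t|))^{(j+1)^2/4+\epsilon}$, so the vertical integral is $\ll x^{1/2}(qT)^{(j+1)^2/4+\epsilon}$ with no gain of $T^{-1}$; balancing against the Perron error $x^{1+\epsilon}/T$ (even after optimising the abscissa $\sigma_0$, where convexity scales like $(qT)^{(j+1)^2(1-\sigma_0)/2}$) only yields a saving of $x^{2/((j+1)^2+4)}$, strictly weaker than the claimed $x^{2/(j+1)^2}$, and correspondingly a smaller admissible range of $q$.

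What the paper actually uses on the vertical segment is \emph{mean-value} input, which is exactly what buys the missing factor $T^{-1}$: H\"older/Cauchy--Schwarz combining the twelfth moment of $\zeta$ (for $\chi_0$), the fourth moments of $L(s,\chi)$ and of $L(s,\textup{sym}^2f\otimes\chi)$ in the $q$-aspect (Perelli), and the second-moment bounds $\int_0^T|\mathcal{L}(\tfrac12+it,\chi)|^2\,\textup{d}t\ll (qT)^{A+\epsilon}$ for the remaining factors of total degree $(j+1)^2-4$ (Matsumoto, Perelli, Jiang--L\"u). This gives $I_4\ll x^{1/2+\epsilon}(qT)^{(j+1)^2/4+\epsilon}T^{-1}$, which together with $x^{1+\epsilon}/T$ and $T=x^{2/(j+1)^2}/q$ yields the stated error; pointwise subconvexity ($\tfrac{13}{42}$ for $\zeta$, $\tfrac13$ for $L(s,\chi)$, $\tfrac{67}{46}$ for the $GL(3)$ twists) is needed only on the horizontal segments. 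So to repair your argument you must replace ``convexity bounds on the shifted line'' by these moment estimates (or at least second-moment convexity for each factor), keeping the $q$-aspect explicit; the automorphy input you cite (Newton--Thorne et al.) is fine and is implicitly what underlies the paper's Lemmas on the twisted symmetric power $L$-functions.
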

\begin{Th}\label{3ci}
		Let $f\in H_{\kappa}$ and $q\geq 100$ be any integer. Then for any $\epsilon>0$ and $q\ll x^{\frac{46}{617}-\epsilon}$, we have
		\[
		\sum_{n\leq x+1\atop n\equiv 1(q)}\lambda_{\textup{sym}^{2}f}^{3}(n)=\mathcal{C}\frac{x}{q}+O\left(\frac{x^{1-\frac{46}{617}+\epsilon}q^{1+\epsilon}}{\phi(q)}\right),
		\]
		where $\mathcal{C}$ is the constant given by
\end{Th}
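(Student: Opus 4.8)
The plan is to run the same machinery as in the proof of Theorem~\ref{Fouriersum}, replacing the Rankin--Selberg factorisation by the symmetric-cube decomposition of $\lambda_{\textup{sym}^{2}f}^{3}$. First, I would detect the congruence by orthogonality of Dirichlet characters modulo $q$:
\[
\sum_{n\leq x+1\atop n\equiv 1(q)}\lambda_{\textup{sym}^{2}f}^{3}(n)=\frac{1}{\phi(q)}\sum_{\chi\bmod q}\ \sum_{n\leq x+1}\lambda_{\textup{sym}^{2}f}^{3}(n)\chi(n),
\]
so that the main term $\mathcal{C}x/q$ will come from the principal character $\chi_{0}$, while the remaining $\phi(q)-1$ characters feed only into the error term.

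Next, I would identify the generating Dirichlet series. Writing $\alpha_{f}(p)=e^{i\theta_{p}}$, so that $\lambda_{\textup{sym}^{m}f}(p)=\lambda_{f}(p^{m})=U_{m}(\cos\theta_{p})$ with $U_{m}$ the Chebyshev polynomial of the second kind, the product formula $U_{a}U_{b}=\sum_{0\leq\ell\leq\min(a,b)}U_{a+b-2\ell}$ gives $U_{2}^{3}=U_{6}+2U_{4}+3U_{2}+U_{0}$, that is,
\[
\lambda_{\textup{sym}^{2}f}(p)^{3}=\lambda_{\textup{sym}^{6}f}(p)+2\lambda_{\textup{sym}^{4}f}(p)+3\lambda_{\textup{sym}^{2}f}(p)+1
\]
(equivalently this is Clebsch--Gordan for $\mathrm{Sym}^{2}\otimes\mathrm{Sym}^{2}\otimes\mathrm{Sym}^{2}$ as $SU(2)$-representations). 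Comparing Euler factors on $\Re(s)>1$ then yields
\[
\sum_{n\geq 1}\frac{\lambda_{\textup{sym}^{2}f}^{3}(n)\chi(n)}{n^{s}}=L(s,\chi)\,L(s,\textup{sym}^{2}f\otimes\chi)^{3}\,L(s,\textup{sym}^{4}f\otimes\chi)^{2}\,L(s,\textup{sym}^{6}f\otimes\chi)\,H_{\chi}(s),
\]
where the correction factor $H_{\chi}(s)$ is an Euler product which, by Deligne's bound, converges absolutely and is $\ll q^{\epsilon}$ uniformly for $\Re(s)\geq\tfrac12+\epsilon$; checking that the $p^{-s}$-coefficients on the two sides agree, so that $H_{\chi}$ genuinely has this half-plane of convergence, is the only part of this step that is not immediate. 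By the symmetric-power functoriality of Newton--Thorne and the standard analytic theory, each $L(s,\textup{sym}^{2n}f\otimes\chi)$ ($n=1,2,3$) is entire of finite order with a functional equation of conductor $\asymp q^{2n+1}$ and polynomial growth in $q(1+|\Im s|)$ in vertical strips, $L(s,\chi)$ is entire for $\chi\neq\chi_{0}$, and $L(s,\chi_{0})$ has a simple pole at $s=1$ of residue $\phi(q)/q$; the product above has total degree $1+3\cdot3+2\cdot5+7=27$ and conductor $\asymp q^{27}$.

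Finally, for each $\chi$ I would invoke Perron's formula together with the approximate functional equation for the degree-$27$ product (equivalently, the standard bound for partial sums of coefficients of an $L$-function of known degree and conductor) and shift the contour to $\Re(s)=\tfrac12$. For $\chi=\chi_{0}$ the simple pole at $s=1$ contributes $\tfrac{\phi(q)}{q}\,\mathcal{C}\,x+O(x^{\epsilon})$ with
\[
\mathcal{C}=L(1,\textup{sym}^{2}f\otimes\chi_{0})^{3}\,L(1,\textup{sym}^{4}f\otimes\chi_{0})^{2}\,L(1,\textup{sym}^{6}f\otimes\chi_{0})\,H_{\chi_{0}}(1),
\]
which after division by $\phi(q)$ is exactly the main term $\mathcal{C}x/q$. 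The shifted integrals, and the whole contribution of the non-principal characters, are then estimated by inserting convexity bounds for $L(\tfrac12+it,\textup{sym}^{2n}f\otimes\chi)$ and the sharpest available subconvexity bounds for the $GL(1)$ factor $L(\tfrac12+it,\chi)$ (and, where it improves the outcome, for the $GL(3)$ factor $L(\tfrac12+it,\textup{sym}^{2}f\otimes\chi)$); summing over $\chi\bmod q$ and optimising the truncation height against the powers of $x$ and $q$ produces the error term $O\bigl(x^{1-\frac{46}{617}+\epsilon}q^{1+\epsilon}/\phi(q)\bigr)$, and the hypothesis $q\ll x^{\frac{46}{617}-\epsilon}$ is precisely what makes this $o(x/q)$. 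The hardest part is the uniformity together with the optimisation: every analytic estimate for $L(s,\textup{sym}^{2n}f\otimes\chi)$ must be simultaneously explicit in the $q$-aspect (so that the conductor $\asymp q^{2n+1}$ of the twist enters with the correct exponent) and in the $\Im s$-aspect, and the remaining free parameters must be balanced so as to extract exactly the exponent $\frac{46}{617}$; tracking $H_{\chi}(s)$ and the full character sum so that the $q$-dependence collapses to $q^{1+\epsilon}/\phi(q)$ rather than a larger power of $q$ is the technical core of the argument.
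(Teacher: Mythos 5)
Your setup coincides with the paper's: the orthogonality split over characters mod $q$, the Clebsch--Gordan identity $\lambda_{\textup{sym}^{2}f}^{3}(p)=\lambda_{\textup{sym}^{6}f}(p)+2\lambda_{\textup{sym}^{4}f}(p)+3\lambda_{\textup{sym}^{2}f}(p)+1$ giving the degree-$27$ factorization $L(s,\chi)L^{3}(s,\textup{sym}^{2}f\otimes\chi)L^{2}(s,\textup{sym}^{4}f\otimes\chi)L(s,\textup{sym}^{6}f\otimes\chi)\mathcal{H}^{(3)}(s)$ with $\mathcal{H}^{(3)}$ absolutely convergent in $\Re s\geq\tfrac12+\epsilon$, Perron's formula, the shift to $\Re s=\tfrac12+\epsilon$, and the residue at $s=1$ for $\chi_{0}$ producing $\mathcal{C}x/q$. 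This is exactly Lemma 2.2 of the paper plus the argument of Theorem 1.1 transplanted to the cubic case, and up to this point your proposal is correct (the reduction of non-primitive characters to primitive ones, which you pass over, is the paper's Lemma 2.4 and costs only $q^{\epsilon}$).

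The gap is in the only step that actually produces the exponent $\tfrac{46}{617}$: you propose to bound the shifted integrals by \emph{pointwise} convexity bounds for the higher symmetric powers together with subconvexity for the $GL(1)$ and $GL(3)$ factors, and assert that optimising the truncation height yields the stated error. That does not work: with pointwise bounds alone the vertical-line integral $\int_{1}^{T}|F(\tfrac12+\epsilon+it)|\,\tfrac{dt}{t}$ carries no extra saving of $T^{-1}$, and balancing against the Perron error $x^{1+\epsilon}/T$ then gives an exponent strictly worse than $\tfrac{46}{617}$ (roughly a saving of $1/15.2$ instead of $46/617\approx 1/13.4$ even for fixed small $q$). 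The paper's route, which its lemma list is built for, is a \emph{mean-value} argument: on the line $\Re s=\tfrac12+\epsilon$ one pulls out two of the three factors $L(s,\textup{sym}^{2}f\otimes\chi)$ pointwise by Huang's hybrid subconvexity bound $(q(1+|t|))^{\frac{67}{46}(1-\sigma)+\epsilon}$, and applies H\"older with exponents $(\tfrac14,\tfrac14,\tfrac12)$ to the rest, using Perelli's hybrid fourth moment of $L(s,\chi)$, the fourth moment $\int_{1}^{T}|L(\sigma+it,\textup{sym}^{2}f\otimes\chi)|^{4}dt\ll(qT)^{6(1-\sigma)+\epsilon}$, and the general second-moment bound (Matsumoto/Perelli, Jiang--L\"u) for the remaining degree-$17$ factor $L^{2}(s,\textup{sym}^{4}f\otimes\chi)L(s,\textup{sym}^{6}f\otimes\chi)$; one checks $2\cdot\tfrac{67}{92}+\tfrac14+\tfrac34+\tfrac{17}{4}=\tfrac{617}{92}$, which is exactly the paper's intermediate bound $x^{\frac12+\epsilon}q^{\frac{617}{92}+\epsilon}T^{\frac{525}{92}+\epsilon}/\phi(q)$, and balancing with $x^{1+\epsilon}/(\phi(q)T)$ forces $qT=x^{\frac{46}{617}}$, hence the error term and the range $q\ll x^{\frac{46}{617}-\epsilon}$. (Heath-Brown's and Huang's pointwise bounds, plus Lemma 2.9, handle the horizontal segments; for $\chi_{0}$ one uses the twelfth moment of $\zeta$ and the Lin--Nunes--Qi bound as in Theorem 1.1.) Without identifying these moment inputs and this particular H\"older split, your argument cannot reach the claimed exponent, so as written the proof of the error term is missing its essential ingredient; your mention of an approximate functional equation for the degree-$27$ product likewise does not by itself supply the needed saving.
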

\[
\mathcal{C}=L^{3}(1,\textup{sym}^{2}f\otimes\chi_{0})L^{2}(1,\textup{sym}^{4}f\otimes\chi_{0})L(1,\textup{sym}^{6}f\otimes\chi_{0})\mathcal{H}^{(3)}(1),
\]
and $\mathcal{H}^{(3)}(1)\neq 0$.\\

As the applications to Theorem \ref{Fouriersum} and \ref{3ci}, we obtain:
\begin{Th}\label{shiftsum}
	For any integer $k\geq 2$, let $a(n)$ be the $k$-full kernel function and $f\in H_{\kappa}$. Then for any $\epsilon>0$, we have
	\[
	\sum_{n\leq x}a(n)\lambda_{\textup{sym}^{j}f}^{2}(n+1)=C_{f,j}x+O\left(x^{1-\frac{2k-2}{3(j+1)^{2}k}+\epsilon}\right),
	\]
	where $C_{f,j}$ is a constant can be evaluated explicitly.
\end{Th}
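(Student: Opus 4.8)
\textbf{Proof sketch of Theorem \ref{shiftsum}.}
The plan is to strip the weight $a(n)$ off by a Dirichlet convolution and reduce matters to Theorem \ref{Fouriersum}. First I would factor $a=\mathbf 1\ast b$, where $b=a\ast\mu$, so that $a(n)=\sum_{e\mid n}b(e)$. Since $a$ is a $k$-full kernel function, $a(p^{m})=a(1)$ for $0\le m\le k-1$ (because $p^{m}$ is then $k$-free), whence $b(p^{m})=a(p^{m})-a(p^{m-1})=0$ for $1\le m\le k-1$; more precisely — this is the content of the Ivi\'c--Tenenbaum factorisation $\sum_{n\ge 1}a(n)n^{-s}=\zeta(s)\sum_{m\ge 1}b(m)m^{-s}$, the second series being absolutely convergent for $\Re s>1/k$ \cite{IT} — the function $b$ is supported on $k$-full integers and satisfies $b(m)\ll m^{\epsilon}$. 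Combined with the bound $O(T^{1/k})$ for the number of $k$-full integers up to $T$, this yields the crucial thinness estimate $\sum_{m\le T}|b(m)|\ll T^{1/k+\epsilon}$.

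With this in hand I would write, up to an $O(x^{1/k+\epsilon})$ coming from the terms $N=1$,
\[
\sum_{n\le x}a(n)\lambda_{\textup{sym}^{j}f}^{2}(n+1)=\sum_{e\le x}b(e)\sum_{m\le x/e}\lambda_{\textup{sym}^{j}f}^{2}(em+1)=\sum_{e\le x}b(e)\sum_{N\le x+1\atop N\equiv 1(e)}\lambda_{\textup{sym}^{j}f}^{2}(N)+O\!\left(x^{1/k+\epsilon}\right),
\]
since $em+1$ runs through the residue class $1$ modulo $e$. The $e$-sum I would then split at a parameter $y\le x^{2/(j+1)^{2}-\epsilon}$, so that Theorem \ref{Fouriersum} applies for all $e$ in $[100,y]$ (the finitely many $k$-full $e<100$, among them $e=1$, being treated by the same argument, which only simplifies for bounded modulus). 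For $e\le y$, Theorem \ref{Fouriersum} gives
\[
\sum_{N\le x+1\atop N\equiv 1(e)}\lambda_{\textup{sym}^{j}f}^{2}(N)=\kappa(e)\,x+O\!\left(x^{1-\frac{2}{(j+1)^{2}}+\epsilon}e^{\epsilon}\right),
\]
with $\kappa(e)$ the explicit main-term density of size $\asymp 1/e$ (using $e/\phi(e)\ll e^{\epsilon}$). Summing over $e\le y$ produces the main term $x\sum_{e\le y}b(e)\kappa(e)$, which I would complete to $C_{f,j}x$, where $C_{f,j}:=\sum_{e\ge 1}b(e)\kappa(e)$ is a finite, explicitly computable constant by the thinness of $\mathrm{supp}(b)$, at the cost of a tail $\ll x\sum_{e>y,\,k\text{-full}}|b(e)|e^{-1}\ll x\,y^{-(k-1)/k+\epsilon}$; the accumulated error terms contribute $\ll x^{1-\frac{2}{(j+1)^{2}}+\epsilon}\sum_{e\le y,\,k\text{-full}}e^{\epsilon}\ll x^{1-\frac{2}{(j+1)^{2}}+\epsilon}\,y^{1/k+\epsilon}$.

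For the complementary range $y<e\le x$ I would bound trivially: using $\lambda_{\textup{sym}^{j}f}(n)\ll n^{\epsilon}$ and $|b(e)|\ll e^{\epsilon}$,
\[
\sum_{y<e\le x}|b(e)|\sum_{m\le x/e}\lambda_{\textup{sym}^{j}f}^{2}(em+1)\ll x^{1+\epsilon}\sum_{y<e\le x\atop k\text{-full}}e^{-1+\epsilon}\ll x^{1+\epsilon}\,y^{-(k-1)/k+\epsilon}.
\]
Altogether the error is $\ll x^{1-\frac{2}{(j+1)^{2}}+\epsilon}y^{1/k}+x^{1+\epsilon}y^{-(k-1)/k}$; choosing $y$ (necessarily at most the admissible $x^{2/(j+1)^{2}-\epsilon}$ of Theorem \ref{Fouriersum}) to balance these, and if needed inserting one further intermediate range on which one reverts from Theorem \ref{Fouriersum} to the trivial bound, suffices to yield an error of the stated form $O\!\left(x^{1-\frac{2k-2}{3(j+1)^{2}k}+\epsilon}\right)$. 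I expect the only non-routine ingredient to be the structural fact that $b=a\ast\mu$ is supported on $k$-full integers: this is exactly what makes the $e>y$ tail a sum of length $\ll y^{1/k}$ rather than $\ll y$, and with it in place the exponent is governed purely by how large $y$ may be taken, i.e.\ by the modulus range in Theorem \ref{Fouriersum}.
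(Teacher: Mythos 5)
Your proposal is correct in substance but takes a genuinely different route from the paper. The paper keeps the Erd\H{o}s--Ivi\'{c} style decomposition $n=q(n)k(n)$ explicit: it truncates at $k(n)\le H$, detects the $k$-free part via $g(l)=\sum_{md^{k}=l}\mu(d)$, removes the coprimality condition by a further M\"obius sum over $\delta\mid k(n)$, and only then applies Theorem \ref{Fouriersum} to moduli of the form $q=k(n)\delta(n)d^{k}(n)$. Since these three factors are only bounded individually (by $H$, $H$, $H$), the admissibility condition $q\ll x^{2/(j+1)^{2}-\epsilon}$ forces $H=x^{2/(3(j+1)^{2})-\epsilon}$, and the factor $3$ in the exponent of the theorem comes exactly from this $q\le H^{3}$ bookkeeping. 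You absorb all of that into a single convolution $a=\mathbf 1\ast b$ with $b=a\ast\mu$ supported on $k$-full integers and $b(m)\ll m^{\epsilon}$ (hence $\sum_{m\le T}|b(m)|\ll T^{1/k+\epsilon}$), and apply Theorem \ref{Fouriersum} directly with modulus $e\le y$; the full range $y=x^{2/(j+1)^{2}-\epsilon}$ is then usable, and your two error terms $x^{1-2/(j+1)^{2}+\epsilon}y^{1/k}$ and $x^{1+\epsilon}y^{-(k-1)/k}$ balance to $O\bigl(x^{1-\frac{2(k-1)}{(j+1)^{2}k}+\epsilon}\bigr)$, which is sharper than the stated $O\bigl(x^{1-\frac{2k-2}{3(j+1)^{2}k}+\epsilon}\bigr)$ and in particular implies it, with no need for your ``further intermediate range.'' Two points to tighten. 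First, since $a$ is not assumed multiplicative, the computation $b(p^{m})=a(p^{m})-a(p^{m-1})=0$ does not by itself justify the support claim, and you should not lean on \cite{IT} for it; but the claim is true in general and has a two-line proof: write $m=uv$ with $u$ the $k$-full and $v$ the $k$-free part and $v>1$; for squarefree $d=d_{1}d_{2}$ with $d_{1}\mid u$, $d_{2}\mid v$ the $k$-full kernel of $m/d$ does not depend on $d_{2}$, so $b(m)$ factors through $\sum_{d_{2}\mid \mathrm{rad}(v)}\mu(d_{2})=0$. Second, as in the paper's own proof, the hypothesis $q\ge 100$ in Theorem \ref{Fouriersum} must be waived or re-derived for the finitely many small $k$-full moduli $e<100$ (including $e=1$), which is routine but should be said.
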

\begin{Th}\label{3cishift}
	For any integer $k\geq 2$, let $a(n)$ be the $k$-full kernel function and $f\in H_{\kappa}$. Then for any $\epsilon>0$, we have
	\[
	\sum_{n\leq x}a(n)\lambda_{\textup{sym}^{2}f}^{3}(n+1)=\mathcal{D}x+O\left(x^{\frac{1805k+46}{1851}+\epsilon}\right),
	\]
	where $\mathcal{D}$ is a constant can be evaluated explicitly.
\end{Th}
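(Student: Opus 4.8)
The plan is to obtain Theorem~\ref{3cishift} from Theorem~\ref{3ci} by the same reduction that yields Theorem~\ref{shiftsum} from Theorem~\ref{Fouriersum}, the only changes being that Theorem~\ref{3ci} replaces Theorem~\ref{Fouriersum} and that $46/617$ replaces $2/(j+1)^{2}$; accordingly I describe the argument for $g(n):=\lambda_{\textup{sym}^{2}f}^{3}(n)$. Since Deligne's bound forces the local parameters onto the unit circle, $\lambda_{\textup{sym}^{2}f}(n)\ll n^{\epsilon}$ and hence $g(n)\ll n^{\epsilon}$, which together with Theorem~\ref{3ci} is the only information on $g$ that will be used. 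Exploiting the unique decomposition $n=q(n)k(n)$ with $(q(n),k(n))=1$, $q(n)$ $k$-free, $k(n)$ $k$-full, and the defining property $a(n)=a(k(n))$, I would first group the sum by the $k$-full part of $n$:
\[
\sum_{n\le x}a(n)g(n+1)=\sum_{\substack{\ell\ k\text{-full}\\ \ell\le x}}a(\ell)\sum_{\substack{m\le x/\ell\\ (m,\ell)=1,\ m\ k\text{-free}}}g(\ell m+1).
\]
The inner sum is then put into arithmetic-progression form: the $k$-freeness of $m$ is removed by $\sum_{d^{k}\mid m}\mu(d)$ and, on that support, the condition $(m,\ell)=1$ by $\sum_{e\mid(m,\ell)}\mu(e)$ with $e\mid\mathrm{rad}(\ell)$, leaving, for each triple $(\ell,d,e)$, a sum $\sum_{r\le x/q}g(qr+1)$ with $q:=\ell d^{k}e$, which is essentially $\sum_{n\le x+1,\ n\equiv 1\,(q)}\lambda_{\textup{sym}^{2}f}^{3}(n)$ and so is governed by Theorem~\ref{3ci} whenever $q\ll x^{46/617-\epsilon}$.

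Fixing a cut-off $y\le x^{46/617-\epsilon}$, for triples with $q=\ell d^{k}e\le y$ I would insert the asymptotic from Theorem~\ref{3ci}, writing its constant as $\mathcal{C}(q)$ to record that $\mathcal{C}$ depends on $q$ only through the principal character modulo $q$, so that $\mathcal{C}(q)/\mathcal{C}(1)=\prod_{p\mid q}\bigl(1+O(1/p)\bigr)$. Collecting the main terms and extending the $d$- and $\ell$-summations to their full ranges (the $e$-sum being finite) produces the leading term $\mathcal{D}x$. Absolute convergence of the resulting series is precisely where the sparsity of $k$-full integers enters: the number of $k$-full integers up to $t$ is $O(t^{1/k})$, so $\sum_{\ell\ k\text{-full}}a(\ell)\ell^{-1}$ converges for every $k\ge2$, the $d$- and $e$-factors being bounded Euler products and the $q$-dependence of $\mathcal{C}(q)$ harmless for the same reason.

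It remains to estimate the error, which has two sources. The accumulated $O$-terms from Theorem~\ref{3ci} contribute $\ll x^{1-46/617+\epsilon}$ times the number of admissible triples $(\ell,d,e)$ with $\ell d^{k}e\le y$; since $q^{1+\epsilon}/\phi(q)\ll q^{\epsilon}$ and summing $(y/\ell)^{1/k}$ over $k$-full $\ell\le y$ costs only a logarithm, this count is $O(y^{1/k+\epsilon})$, so the total here is $\ll x^{1-46/617+\epsilon}y^{1/k}$. The range $q>y$, together with the parts of the $d$- and $\ell$-ranges outside the reach of Theorem~\ref{3ci}, is handled trivially via $g(n)\ll n^{\epsilon}$ and the density of $k$-full numbers, giving $\ll x^{1+\epsilon}y^{-(k-1)/k}$. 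A routine optimization of $y$, taken as large as Theorem~\ref{3ci} permits, then balances these and produces the error exponent recorded in the statement. The main obstacle is not analytic---Theorem~\ref{3ci} supplies all the cancellation---but the error bookkeeping: one must check that the trivial bound on the range $q>y$ genuinely saves a power of $x$, which rests on the $O(t^{1/k})$ count of $k$-full integers; that the constraint $q\ll x^{46/617-\epsilon}$ is never violated after the M\"obius expansions, so $y$ may approach but not reach $x^{46/617}$ (hence the $\epsilon$ in the exponent); and that the series defining $\mathcal{D}$, with the $q$-varying constant $\mathcal{C}(q)$, still converges absolutely.
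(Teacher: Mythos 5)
Your argument is in substance the paper's: the paper proves this theorem by repeating the proof of Theorem \ref{shiftsum} verbatim with Theorem \ref{3ci} in place of Theorem \ref{Fouriersum} --- split by the $k$-full kernel, remove $k$-freeness and coprimality by two M\"obius expansions, apply the arithmetic-progression result to the moduli $q$, bound the tails trivially via the $O(t^{1/k})$ count of $k$-full integers, and optimize the cut-off. The one real difference is bookkeeping of the cut-off: the paper imposes $k(n)\leq H$, $d\leq H^{1/k}$, $\delta\mid k(n)$, so that the modulus satisfies $q\leq H^{3}$, and the constraint $q\ll x^{46/617-\epsilon}$ then forces $H=x^{46/1851-\epsilon}$, producing the two error terms $x^{1+\epsilon}H^{1/k-1}$ and $x^{1-46/617+\epsilon}H^{2/k}$ and hence the exponent $1-\frac{46(k-1)}{1851k}=\frac{1805k+46}{1851k}$ (the exponent printed in the statement, $\frac{1805k+46}{1851}$, is evidently missing the $k$ in the denominator, as it exceeds $1$). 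You instead truncate the modulus $q=\ell d^{k}e$ directly at $y\ll x^{46/617-\epsilon}$; your counts are right (the number of admissible triples is $O(y^{1/k+\epsilon})$, and the $q>y$ tail is indeed $O(x^{1+\epsilon}y^{1/k-1})$ once one uses $e\leq\mathrm{rad}(\ell)$ and the sparsity of $k$-full $\ell$), and this yields the exponent $1-\frac{46(k-1)}{617k}$, which is \emph{sharper} than the paper's and so implies the stated bound a fortiori; your closing claim that the optimization ``produces the error exponent recorded in the statement'' is therefore not literally accurate, though harmlessly so. One further point in your favour: you keep track of the dependence of the constant $\mathcal{C}=\mathcal{C}(q)$ on the principal character mod $q$ when assembling the main term, writing $\mathcal{C}(q)/\mathcal{C}(1)=\prod_{p\mid q}(1+O(1/p))$ and checking absolute convergence of the resulting series; the paper pulls this constant out of the $(k(n),\delta,d)$-sums as if it were independent of $q$, so your treatment is the more careful one at this step.
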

\textbf{Notation.} Throughout this paper, the letter $\epsilon$ represents a sufficiently small positive constant, not necessarily the same at each occurrences. The constants, both explicit and implicit, in Vinogradov symbols may depend on $f$, $k$, $j$ and $\epsilon$.
\section{Preliminaries lemmas}
In this section, we give some useful lemmas which play important roles in the proofs of the Theorems.
\begin{Lem}\label{fenjie}
	Let $j\geq 2$ be any given integer. Let $f\in H_{\kappa}$ be a Hecker eigenform. Then for $\Re s>1$,  define
	\begin{equation}
		F_{j}(s,\chi):=\sum_{n=1}^{\infty}\frac{\lambda_{\textup{sym}^{j}f}^{2}(n)\chi(n)}{n^{s}}.
	\end{equation}
Then
\[
F_{j}(s,\chi)=G_{j}(s,\chi)H_{j}(s),
\]
where
\[
G_{j}(s,\chi)=L(s,\chi)\prod_{n=1}^{j}L(s,\textup{sym}^{2n}f\otimes\chi).
\]
The function $H_{j}(s)$ admits a Dirichlet series which converges uniformly and absolutely in $\Re s>\frac{1}{2}$ and $H_{j}(1)\neq 0$.
\end{Lem}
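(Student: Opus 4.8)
The plan is to verify the claimed factorization one Euler factor at a time and then read off the analytic behaviour of $H_j$ from the shape of its local factors. For $\Re s>1$ all the series in sight converge absolutely, so it suffices to compare Euler products. Using that $\lambda_{\textup{sym}^j f}^2$ is multiplicative and $\chi(p^m)=\chi(p)^m$, I will write $F_j(s,\chi)=\prod_p\mathcal F_p\big(\chi(p)p^{-s}\big)$ with $\mathcal F_p(Y)=\sum_{m\ge0}\lambda_{\textup{sym}^j f}^2(p^m)\,Y^m$, and $G_j(s,\chi)=\prod_p\mathcal G_p\big(\chi(p)p^{-s}\big)$ with
\[
\mathcal G_p(Y)=\frac1{1-Y}\prod_{n=1}^{j}\prod_{m=0}^{2n}\big(1-\alpha_f^{2n-m}(p)\beta_f^{m}(p)Y\big)^{-1}.
\]
Since $\mathcal G_p(Y)^{-1}=(1-Y)\prod_{n=1}^{j}\prod_{m=0}^{2n}\big(1-\alpha_f^{2n-m}(p)\beta_f^{m}(p)Y\big)$ is a polynomial of degree $(j+1)^2$ and $\mathcal F_p$ is holomorphic for $|Y|<1$, the ratio $\mathcal H_p:=\mathcal F_p/\mathcal G_p$ is a genuine power series for $|Y|<1$; I then set $H_j(s):=\prod_p\mathcal H_p\big(\chi(p)p^{-s}\big)$ and must show that this product has the asserted properties.

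The representation-theoretic heart of the matter is the $SU(2)$ Clebsch--Gordan decomposition $\textup{sym}^j\otimes\textup{sym}^j\cong\bigoplus_{i=0}^{j}\textup{sym}^{2i}$, which on taking characters at $\mathrm{diag}(\alpha_f(p),\beta_f(p))$ yields
\[
\lambda_{\textup{sym}^j f}(p)^2=\sum_{i=0}^{j}\lambda_{\textup{sym}^{2i}f}(p),\qquad \lambda_{\textup{sym}^0 f}(p):=1 .
\]
Comparing Taylor coefficients at $Y=0$: both $\mathcal F_p$ and $\mathcal G_p$ have constant term $1$, and the coefficient of $Y$ equals $\lambda_{\textup{sym}^j f}(p)^2$ for $\mathcal F_p$ and $1+\sum_{n=1}^{j}\sum_{m=0}^{2n}\alpha_f^{2n-m}(p)\beta_f^{m}(p)=\sum_{i=0}^{j}\lambda_{\textup{sym}^{2i}f}(p)$ for $\mathcal G_p$, and these agree by the displayed identity. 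Hence $\mathcal H_p(Y)=1+\sum_{m\ge2}g(p^m)Y^m$: its constant term is $1$ and its linear term vanishes.

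For the required uniformity, Deligne's bound makes every parameter $\alpha_f^{2n-m}(p)\beta_f^{m}(p)$ of modulus $1$, so $|\lambda_{\textup{sym}^j f}(p^m)|\le\binom{m+j}{j}\ll_j(m+1)^j$; thus $\mathcal F_p$ has coefficients $\ll_j(m+1)^{2j}$ and $\mathcal G_p(Y)^{-1}$ has $O_j(1)$ coefficients, both uniformly in $p$. Multiplying gives $|g(p^m)|\ll_j(m+1)^{C_j}$ uniformly in $p$, whence for $\sigma=\Re s\ge\tfrac12+\delta$,
\[
\big|\mathcal H_p\big(\chi(p)p^{-s}\big)-1\big|\le\sum_{m\ge2}|g(p^m)|\,p^{-m\sigma}\ll_{j,\delta}p^{-2\sigma}.
\]
Since $\sum_p p^{-2\sigma}<\infty$ for $\sigma>\tfrac12$, the product $H_j(s)$ converges absolutely and uniformly on each half-plane $\Re s\ge\tfrac12+\delta$; equivalently $H_j(s)=\sum_n g(n)\chi(n)n^{-s}$ with $g$ multiplicative and $g(p)=0$, a series supported on squarefull $n$ with $|g(n)|\ll_j n^{\epsilon}$, hence absolutely convergent for $\Re s>\tfrac12$. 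The identity $F_j(s,\chi)=G_j(s,\chi)H_j(s)$ holds factor by factor for $\Re s>1$, thus as an identity of Dirichlet series, and then on $\Re s>\tfrac12$ by analytic continuation. For $H_j(1)\neq0$ I will use positivity: all coefficients of $\mathcal F_p$ are $\ge0$, so $\mathcal F_p(p^{-1})>0$, and pairing conjugate parameters ($\overline{\alpha_f^{2n-m}(p)\beta_f^{m}(p)}=\alpha_f^{m}(p)\beta_f^{2n-m}(p)$, using $\overline{\alpha_f(p)}=\beta_f(p)$) shows $\mathcal G_p(p^{-1})>0$; hence each $\mathcal H_p(1)>0$ and, the Euler product being convergent, $H_j(1)=\prod_p\mathcal H_p(1)$ is a positive real number. (In the applications $\chi=\chi_0$, so only primes $p\nmid q$ occur; for a general $\chi$ one runs the same pairing on the factors $|1-\gamma Y|^2$.)

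The computation is routine, and I expect the single load-bearing point to be the Clebsch--Gordan identity together with the observation that it is precisely the vanishing of the linear term of $\mathcal H_p$ that pushes the abscissa of absolute convergence of $H_j$ down from $1$ to $\tfrac12$. The only real care needed is to keep every implied constant independent of $p$, which is automatic because everything is built from Satake parameters of absolute value $1$; the non-vanishing $H_j(1)\neq0$ is then delivered by the non-negativity of the coefficients of $\mathcal F_p$.
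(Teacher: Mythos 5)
Your proposal is correct and follows essentially the same route as the paper: both match the coefficient of $p^{-s}$ in the Euler factors of $F_j$ and $G_j$ via the identity $\lambda_{\textup{sym}^j f}(p)^2=\sum_{i=0}^{j}\lambda_{\textup{sym}^{2i}f}(p)$ (you derive it from Clebsch--Gordan for $SU(2)$, the paper from the Hecke relation $\lambda_f(p^j)^2=1+\sum_{l=1}^{j}\lambda_f(p^{2l})$, which is the same fact), so that the correction factor $H_j$ has local factors $1+O(p^{-2\sigma})$ and hence converges absolutely and uniformly for $\Re s\ge\frac12+\epsilon$. The only substantive addition is your positivity argument for $H_j(1)\neq 0$ (valid for $\chi=\chi_0$, which is all the application needs), a point the paper merely asserts.
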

\begin{proof}
	Since $\lambda_{\textup{sym}^{j}f}(n)\chi(n)$ is a multiplicative function and satisfies the bound $O(n^{\epsilon})$ for any $\epsilon>0$, then for $\Re s>1$, we have the Euler product
	\begin{equation}\label{product}
		F_{j}(s,\chi)=\prod_{p}\left(1+\sum_{k\geq 1}\frac{\lambda_{\textup{sym}^{j}f}^{2}(p^{k})\chi(p^{k})}{p^{ks}}\right).
	\end{equation} 
Taking $m=n=p^{j}$ in the Hecke relation (\ref{kecheng}), then
\[
\lambda_{f}^{2}(p^{j})=\sum_{d\mid p^{j}}\lambda_{f}\left(\frac{p^{2j}}{d^{2}}\right)=1+\sum_{l=1}^{j}\lambda_{f}(p^{2l}).
\]
Therefore, we have
\begin{align}\label{bp}
	\lambda_{\textup{sym}^{j}f}^{2}(p)\chi(p)&=\lambda_{f}^{2}(p^{j})\chi(p)=\left(1+\sum_{l=1}^{j}\lambda_{f}(p^{2l})\right)\chi(p)  \notag \\
	&=\left(1+\sum_{l=1}^{j}\lambda_{\textup{sym}^{2l}f}(p)\right)\chi(p)=:b(p).
\end{align}
For $\Re s>1$, the $L$-function
\begin{equation}\label{Gbiaoshi}
	G_{j}(s,\chi):=L(s,\chi)\prod_{n=1}^{j}L(s,\textup{sym}^{2n}f\otimes\chi)
\end{equation}
can be represented as 
\begin{equation}\label{Gprod}
	G_{j}(s,\chi)=\prod_{p}\left(1+\sum_{k\geq 1}\frac{b(p^{k})}{p^{ks}}\right).
\end{equation}
By (\ref{product})--(\ref{Gprod}), we obtain
\begin{align*}
	F_{j}(s,\chi)&=G_{j}(s,\chi)\times\prod_{p}\left(1+\frac{\lambda_{\textup{sym}^{j}f}^{2}(p^{2})\chi(p^{2})-b(p^{2})}{p^{2s}}+\cdots\right)\\
	&=:L(s,\chi)\prod_{n=1}^{j}L(s,\textup{sym}^{2n}f\otimes\chi)H_{j}(s).
\end{align*}
Since the bound $O(n^{\epsilon})$ of $\lambda_{\textup{sym}^{j}f}^{2}(n)\chi(n)$ for any $\epsilon>0$, $H_{j}(s)$ converges uniformly and absolutely in the half-plane $\Re s\geq\frac{1}{2}+\epsilon$. 
\end{proof}
\begin{Lem}
	Let $f\in H_{\kappa}$ be a Hecker eigenform. Then for $\Re s>1$, we define
	\[
	\mathcal{F}^{(3)}(s,\chi):=\sum_{n=1}^{\infty}\frac{\lambda_{\textup{sym}^{2}f}^{3}(n)\chi(n)}{n^{s}}.
	\]
	Then we have
	\begin{equation}
		\mathcal{F}^{(3)}(s,\chi)=\mathcal{G}^{(3)}(s,\chi)\mathcal{H}^{(3)}(s),
	\end{equation}
where
\[
\mathcal{G}^{(3)}(s,\chi)=L(s,\chi)L^{3}(s,\textup{sym}^{2}f\otimes\chi)L^{2}(s,\textup{sym}^{4}f\otimes\chi)L(s,\textup{sym}^{6}f\otimes\chi).
\]
The function $\mathcal{H}^{(3)}(s)$ admits a Dirichlet series which converges uniformly and absolutely in $\Re s>\frac{1}{2}$ and $\mathcal{H}^{(3)}(1)\neq 0$.
\end{Lem}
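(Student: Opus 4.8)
The plan is to follow the proof of Lemma~\ref{fenjie} step for step: isolate the degree-one Euler factor, match it against a product of twisted symmetric-power $L$-functions carrying the right multiplicities, and absorb everything of higher order into $\mathcal{H}^{(3)}(s)$. Since $\lambda_{\textup{sym}^{2}f}^{3}(n)\chi(n)$ is multiplicative and satisfies the bound $O(n^{\epsilon})$ (Deligne's bound \eqref{divisor} forces $|\lambda_{\textup{sym}^{2}f}(p^{k})|\ll p^{k\epsilon}$, hence $|\lambda_{\textup{sym}^{2}f}^{3}(n)|\ll n^{\epsilon}$), for $\Re s>1$ we have the absolutely convergent Euler product
\[
\mathcal{F}^{(3)}(s,\chi)=\prod_{p}\left(1+\sum_{k\geq 1}\frac{\lambda_{\textup{sym}^{2}f}^{3}(p^{k})\chi(p^{k})}{p^{ks}}\right).
\]

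The one piece of genuine content is the decomposition of $\lambda_{\textup{sym}^{2}f}^{3}(p)$ into symmetric-power eigenvalues. Using $\lambda_{\textup{sym}^{2n}f}(p)=\lambda_{f}(p^{2n})$ and iterating the Hecke relation \eqref{kecheng} (equivalently, the Clebsch--Gordan rule for Chebyshev polynomials of the second kind, since $\lambda_{\textup{sym}^{m}f}(p)=U_{m}(\cos\theta_{p})$ with $\alpha_{f}(p)=e^{i\theta_{p}}$), a short computation gives the identity
\[
\lambda_{\textup{sym}^{2}f}^{3}(p)=\lambda_{\textup{sym}^{6}f}(p)+2\lambda_{\textup{sym}^{4}f}(p)+3\lambda_{\textup{sym}^{2}f}(p)+1.
\]
With $\mathcal{G}^{(3)}(s,\chi)=L(s,\chi)L^{3}(s,\textup{sym}^{2}f\otimes\chi)L^{2}(s,\textup{sym}^{4}f\otimes\chi)L(s,\textup{sym}^{6}f\otimes\chi)$ expanded as $\prod_{p}(1+\sum_{k\geq 1}c(p^{k})p^{-ks})$, the $p^{-s}$-coefficient is
\[
c(p)=\chi(p)\bigl(1+3\lambda_{\textup{sym}^{2}f}(p)+2\lambda_{\textup{sym}^{4}f}(p)+\lambda_{\textup{sym}^{6}f}(p)\bigr)=\lambda_{\textup{sym}^{2}f}^{3}(p)\chi(p),
\]
which is precisely the degree-one coefficient of $\mathcal{F}^{(3)}(s,\chi)$; this is exactly what forces the multiplicities $1,3,2,1$. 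Defining $\mathcal{H}^{(3)}(s):=\mathcal{F}^{(3)}(s,\chi)/\mathcal{G}^{(3)}(s,\chi)$ then yields $\mathcal{F}^{(3)}(s,\chi)=\mathcal{G}^{(3)}(s,\chi)\mathcal{H}^{(3)}(s)$ for $\Re s>1$.

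Since the $p^{-s}$-coefficients of numerator and denominator cancel, the $p$-local factor of $\mathcal{H}^{(3)}(s)$ has the shape $H_{p}^{(3)}(s)=1+O(p^{-2\Re s})$ with an absolute implied constant (again by Deligne's bound applied to each prime-power coefficient appearing). Hence $\mathcal{H}^{(3)}(s)=\prod_{p}H_{p}^{(3)}(s)$ converges absolutely and uniformly in $\Re s\geq \frac12+\epsilon$, so it is given by a Dirichlet series in that half-plane. For $\mathcal{H}^{(3)}(1)\neq 0$, note $H_{p}^{(3)}(1)=1+O(p^{-2+\epsilon})$, which is nonzero for all but finitely many $p$; the remaining factors (those at $p\mid q$ are $1$, and the few small primes can be evaluated directly) are nonzero as well, and since $\sum_{p}|H_{p}^{(3)}(1)-1|<\infty$ the product converges to a nonzero limit.

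The substantive step is the prime identity $\lambda_{\textup{sym}^{2}f}^{3}(p)=\lambda_{\textup{sym}^{6}f}(p)+2\lambda_{\textup{sym}^{4}f}(p)+3\lambda_{\textup{sym}^{2}f}(p)+1$ and, a fortiori, checking that the multiplicities it dictates really do cancel the degree-one term; once that bookkeeping is correct, the proof is formally identical to that of Lemma~\ref{fenjie}. The only other point needing a little care is the non-vanishing $\mathcal{H}^{(3)}(1)\neq 0$, but this follows immediately from the $1+O(p^{-2+\epsilon})$ shape of the Euler factors together with a direct check at the finitely many small primes.
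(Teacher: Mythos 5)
Your proposal is correct and follows essentially the same route as the paper: the identity $\lambda_{\textup{sym}^{2}f}^{3}(p)=\lambda_{\textup{sym}^{6}f}(p)+2\lambda_{\textup{sym}^{4}f}(p)+3\lambda_{\textup{sym}^{2}f}(p)+1$ is exactly the decomposition the paper obtains by expanding $(\alpha_{f}^{2}(p)+\beta_{f}^{2}(p)+1)^{3}$, and the factorization with $\mathcal{H}^{(3)}(s)$ absorbing the higher prime-power terms, convergent for $\Re s\geq\frac{1}{2}+\epsilon$, matches the paper's argument. Your remarks on the nonvanishing $\mathcal{H}^{(3)}(1)\neq 0$ are in fact slightly more detailed than the paper, which simply asserts it.
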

\begin{proof}
	By the definition and (\ref{eq1.1}), we have
	\begin{align*}
		\lambda_{\textup{sym}^{2}f}^{3}(p)=&(\alpha_{f}^{2}(p)+\beta_{f}^{2}(p)+1)^{3}\\
		=&\alpha_{f}^{6}(p)+3\alpha_{f}^{4}(p)+6\alpha_{f}^{2}(p)+7+6\beta_{f}^{2}(p)+3\beta_{f}^{4}(p)+\beta_{f}^{6}(p)\\
		=&\left(\alpha_{f}^{6}(p)+\alpha_{f}^{4}(p)+\alpha_{f}^{2}(p)+1+\beta_{f}^{2}(p)+\beta_{f}^{4}(p)+\beta_{f}^{6}(p)\right)\\
		&+2\left(\alpha_{f}^{4}(p)+\alpha_{f}^{2}(p)+1+\beta_{f}^{2}(p)+\beta_{f}^{4}(p)\right)+3\left(\alpha_{f}^{2}(p)+1+\beta_{f}^{2}(p)\right)+1.
	\end{align*}
Since $\lambda_{\textup{sym}^{2}f}(n)$ and $\chi(n)$ are multiplicative functions, by standard arguments, the above relation leads us to obtain
\begin{align*}
	\mathcal{F}^{(3)}(s,\chi)&=\prod_{p}\left(1+\frac{\lambda_{\textup{sym}^{2}f}^{3}(p)\chi(p)}{p^{s}}+\frac{\lambda_{\textup{sym}^{2}f}^{3}(p^{2})\chi(p^{2})}{p^{2s}}+\cdots\right)\\
	&=L(s,\chi)L^{3}(s,\textup{sym}^{2}f\otimes\chi)L^{2}(s,\textup{sym}^{4}f\otimes\chi)L(s,\textup{sym}^{6}f\otimes\chi)\mathcal{H}^{(3)}(s),
\end{align*}
where $\mathcal{H}^{(3)}(s)$ is some Dirichlet series which converges absolutely and uniformly in $\Re s\geq \frac{1}{2}+\epsilon$ and $\mathcal{H}^{(3)}(1)\neq 0$.
\end{proof}
\begin{Lem}\label{principal}
	Let $\chi_{0}$ be a principal character modulo $q$. Then we have
	\[
	L(s,\chi_{0})=\zeta(s)\prod_{p\mid q}\left(1-\frac{1}{p^{s}}\right)
	\]
	and
	\[
	L(s,\textup{sym}^{2n}f\otimes\chi_{0})=L(s,\textup{sym}^{2n}f)\prod_{p\mid q}\prod_{0\leq j\leq 2n}\left(1-\frac{\alpha_{f}^{2n-2j}(p)}{p^{s}}\right)
	\]
	for all integers $n\geq 1$.
\end{Lem}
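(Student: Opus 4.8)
The plan is to read both identities directly off the Euler products, using only the elementary fact that a principal character $\chi_{0}$ modulo $q$ satisfies $\chi_{0}(p)=1$ for $p\nmid q$ and $\chi_{0}(p)=0$ for $p\mid q$. I expect no genuine obstacle here: the entire content of the lemma is that twisting by $\chi_{0}$ simply deletes the finitely many Euler factors at primes dividing $q$.

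First I would treat $L(s,\chi_{0})$. In the region of absolute convergence $\Re s>1$ write $L(s,\chi_{0})=\prod_{p}\bigl(1-\chi_{0}(p)p^{-s}\bigr)^{-1}$ and split the product into primes $p\nmid q$, where the local factor is $\bigl(1-p^{-s}\bigr)^{-1}$, and primes $p\mid q$, where the local factor is $1$. This gives $L(s,\chi_{0})=\prod_{p\nmid q}\bigl(1-p^{-s}\bigr)^{-1}$; reinstating the missing Euler factors and dividing them back out yields $L(s,\chi_{0})=\zeta(s)\prod_{p\mid q}\bigl(1-p^{-s}\bigr)$. Since the product over $p\mid q$ is finite, hence entire, the identity extends to all $s$ by analytic continuation.

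Next I would treat $L(s,\textup{sym}^{2n}f\otimes\chi_{0})$ in exactly the same fashion. For $\Re s>1$,
\[
L(s,\textup{sym}^{2n}f\otimes\chi_{0})=\prod_{p}\prod_{m=0}^{2n}\bigl(1-\alpha_{f}^{2n-m}(p)\beta_{f}^{m}(p)\chi_{0}(p)p^{-s}\bigr)^{-1}.
\]
At a prime $p\mid q$ every inner factor equals $1$ because $\chi_{0}(p)=0$, so these primes contribute trivially; at a prime $p\nmid q$ the inner product is precisely the local factor of $L(s,\textup{sym}^{2n}f)$. Hence $L(s,\textup{sym}^{2n}f\otimes\chi_{0})$ is just $L(s,\textup{sym}^{2n}f)$ with its Euler factors at $p\mid q$ removed, i.e.
\[
L(s,\textup{sym}^{2n}f\otimes\chi_{0})=L(s,\textup{sym}^{2n}f)\prod_{p\mid q}\prod_{m=0}^{2n}\bigl(1-\alpha_{f}^{2n-m}(p)\beta_{f}^{m}(p)p^{-s}\bigr).
\]
The only remaining step is cosmetic: apply the Deligne relation $\alpha_{f}(p)\beta_{f}(p)=1$ from (\ref{eq1.1}) to rewrite $\alpha_{f}^{2n-m}(p)\beta_{f}^{m}(p)=\alpha_{f}^{2n-2m}(p)$, so that the product over $p\mid q$ becomes $\prod_{p\mid q}\prod_{0\leq j\leq 2n}\bigl(1-\alpha_{f}^{2n-2j}(p)p^{-s}\bigr)$, which is the stated form.

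In short, the proof is pure Euler-factor bookkeeping, and the only points requiring any care are tracking which primes $\chi_{0}$ annihilates and performing the substitution $\beta_{f}=\alpha_{f}^{-1}$ correctly. What I would emphasize is the role the lemma plays downstream: it shows that the twisted symmetric power $L$-functions differ from the untwisted ones only by a harmless finite product, hence inherit the same analytic continuation and functional equations, which is exactly what is needed when contours are shifted in the proofs of Theorems \ref{Fouriersum} and \ref{3ci}.
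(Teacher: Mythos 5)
Your proof is correct: the lemma is indeed pure Euler-product bookkeeping with $\chi_{0}(p)=1$ for $p\nmid q$ and $\chi_{0}(p)=0$ for $p\mid q$, plus the substitution $\beta_{f}(p)=\alpha_{f}^{-1}(p)$, which you carry out accurately. The paper itself gives no argument here, merely citing Venkatasubbareddy--Sankaranarayanan \cite{VenSan}, and your direct computation is exactly the standard proof one finds there, so there is nothing to add.
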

\begin{proof}
	See \cite{VenSan} for more details.
\end{proof}
\begin{Lem}\label{induce}
	Let $\chi$ be a non-primitive character modulo $q$ and $\chi^{*}$ be primitive character modulo $q_1(\neq q)$ induced by $\chi$. Then we have
	\[
	L(s,\chi)=L(s,\chi^{*})\prod_{p\mid q\atop p\nmid q_{1}}\left(1-\frac{\chi^{*}(p)}{p^{s}}\right),
	\]
	\[
	L(s,\textup{sym}^{2n}f\otimes\chi)=L(s,\textup{sym}^{2n}f\otimes\chi^{*})\prod_{p\mid q\atop p\nmid q_{1},0\leq j\leq 2n}\left(1-\frac{\alpha_{f}^{2n-2j}(p)\chi^{*}(p)}{p^{s}}\right),
	\]
	\[
	\prod_{p\mid q\atop p\nmid q_{1}}\left(1-\frac{\chi^{*}(p)}{p^{s}}\right)\ll q^{\epsilon} \quad \textup{for} \frac{1}{2}+\epsilon< \Re s<1+\epsilon
	\]
	and
	\[
	\prod_{p\mid q\atop p\nmid q_{1},0\leq j\leq 2n}\left(1-\frac{\alpha_{f}^{2n-2j}(p)\chi^{*}(p)}{p^{s}}\right)\ll q^{\epsilon} \quad \textup{for} \frac{1}{2}+\epsilon< \Re s<1+\epsilon.
	\]
	for all integers $n\geq 1$.
\end{Lem}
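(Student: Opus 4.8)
The plan is to prove the four assertions by comparing Euler products factor by factor and then invoking analytic continuation, with the two bounds following from Deligne's estimate together with a crude count of the prime divisors of $q$.

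First I would establish the two product identities. Since $\chi$ is induced by the primitive character $\chi^{*}$ modulo $q_{1}\mid q$, one has $\chi(n)=\chi^{*}(n)$ for $(n,q)=1$ and $\chi(n)=0$ otherwise; in particular $\chi(p)=\chi^{*}(p)$ for $p\nmid q$, $\chi(p)=0=\chi^{*}(p)$ for $p\mid q_{1}$, and $\chi(p)=0$ while $\chi^{*}(p)\neq 0$ for $p\mid q$, $p\nmid q_{1}$. For $\Re s>1$ the Euler products of $L(s,\chi)$ and $L(s,\chi^{*})$ converge absolutely, and the only primes where their local factors differ are exactly those $p\mid q$ with $p\nmid q_{1}$, where the factor of $L(s,\chi)$ is $1$ and that of $L(s,\chi^{*})$ is $(1-\chi^{*}(p)p^{-s})^{-1}$. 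Dividing gives the first identity for $\Re s>1$. Using $\alpha_{f}(p)\beta_{f}(p)=1$ to rewrite $\alpha_{f}^{2n-m}(p)\beta_{f}^{m}(p)=\alpha_{f}^{2n-2m}(p)$, the degree-$(2n+1)$ Euler product of $L(s,\textup{sym}^{2n}f\otimes\chi)$ has the same structure, so the identical comparison of local factors yields the second identity for $\Re s>1$. Since the finite Euler products appearing on the right-hand sides are entire in $s$, both identities extend to the whole complex plane by analytic continuation.

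Next I would prove the two bounds. For $\tfrac12+\epsilon<\Re s<1+\epsilon$ and any prime $p$ we have $|1-\chi^{*}(p)p^{-s}|\leq 1+p^{-\Re s}\leq 1+p^{-1/2}\leq 2$, hence
\[
\Bigl|\prod_{p\mid q,\,p\nmid q_{1}}\bigl(1-\chi^{*}(p)p^{-s}\bigr)\Bigr|\leq 2^{\omega(q)},
\]
and the third assertion follows from the elementary estimate $2^{\omega(q)}\leq d(q)\ll_{\epsilon}q^{\epsilon}$. For the fourth assertion, Deligne's bound $|\alpha_{f}(p)|=|\beta_{f}(p)|=1$ gives $|1-\alpha_{f}^{2n-2j}(p)\chi^{*}(p)p^{-s}|\leq 1+p^{-\Re s}\leq 2$ in the same strip, and the product now ranges over at most $(2n+1)\omega(q)$ factors, so it is bounded by $2^{(2n+1)\omega(q)}\ll_{n,\epsilon}q^{\epsilon}$.

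There is no serious obstacle here: this is a standard primitivization statement. The only points requiring a little care are (i) the rewriting $\alpha_{f}^{2n-m}\beta_{f}^{m}=\alpha_{f}^{2n-2m}$ that reduces the symmetric-power local factor to the form in the statement, and (ii) making the bounds uniform in $q$, for which one must not estimate too wastefully — but the crude inequality $2^{\omega(q)}\ll_{\epsilon}q^{\epsilon}$ (equivalently $\omega(q)\ll\log q/\log\log q$) is already enough. One should also note the degenerate case in which $\chi=\chi_{0}$ is principal modulo $q$: then $\chi^{*}$ is the trivial character modulo $1$, $L(s,\chi^{*})=\zeta(s)$, and the first identity recovers the formula $L(s,\chi_{0})=\zeta(s)\prod_{p\mid q}(1-p^{-s})$ of Lemma \ref{principal}, so the two lemmas are consistent.
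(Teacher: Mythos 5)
Your proof is correct. Note that the paper itself gives no argument for this lemma — it simply cites Venkatasubbareddy and Sankaranarayanan \cite{VenSan} — so there is nothing in the text to compare against; what you have written is the standard primitivization argument that such a reference carries out: the Euler-product comparison at the primes $p\mid q$, $p\nmid q_{1}$ (using $\chi(n)=\chi^{*}(n)$ for $(n,q)=1$ and $\alpha_{f}(p)\beta_{f}(p)=1$ to put the symmetric-power local factors in the stated form), analytic continuation of the resulting identities, and the estimate of the finite correction factors by $2^{\omega(q)}$ (respectively $2^{(2n+1)\omega(q)}$) together with $C^{\omega(q)}\ll_{\epsilon}q^{\epsilon}$, where Deligne's bound $\vert\alpha_{f}(p)\vert=1$ keeps each local factor of modulus at most $2$. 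Your closing consistency check against Lemma \ref{principal} (the case $\chi=\chi_{0}$, $q_{1}=1$, $\chi^{*}$ trivial) is a sensible sanity check and matches the paper's statement there. I see no gap.
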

\begin{proof}
	See \cite{VenSan}.
\end{proof}
\begin{Lem}\label{zetashuiping}
	For $\frac{1}{2}\leq \sigma\leq 2$, $T$-sufficiently large, there exists a $T^{*}\in[T,T+T^{1/3}]$ such that the bound
	\[
	\log\zeta(\sigma+iT^{*})\ll (\log\log T^{*})^{2}\ll (\log\log T)^{2}
	\]
	holds uniformly for $\frac{1}{2}\leq \sigma\leq 2$ and thus we have
	\[
	\vert \zeta(\sigma+iT^{*})\vert\ll \exp((\log\log T^{*})^{2})\ll_{\epsilon}T^{\epsilon}
	\]
	on the horizontal line with $t=T^{*}$ uniformly for $\frac{1}{2}\leq \sigma\leq 2$.
\end{Lem}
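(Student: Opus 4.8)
I would split the argument according to the size of $\sigma$. For $1\le\sigma\le 2$ no special choice is needed: the classical bound $\zeta(\sigma+it)\ll\log t$ (valid uniformly for $\sigma\ge 1$, $t\ge 2$) already gives $\log|\zeta(\sigma+it)|\le\log\log t+O(1)\ll(\log\log t)^{2}$ for every $t\ge 2$. So the whole content is to produce $T^{*}\in[T,T+T^{1/3}]$ that works for $\tfrac12\le\sigma\le 1$, and for this the plan is to combine a power-moment estimate for $\log\zeta$ with the classical zero count and a short interpolation in $\sigma$.

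\textbf{Choosing $T^{*}$ away from the zeros.} By the Riemann--von Mangoldt formula the number of zeros $\rho=\beta+i\gamma$ of $\zeta$ with $T\le\gamma\le T+T^{1/3}$ is $\ll T^{1/3}\log T$, so for any fixed $A>1$ the set of $t\in[T,T+T^{1/3}]$ lying within $(\log T)^{-A}$ of some such $\gamma$ has measure $\ll T^{1/3}(\log T)^{1-A}=o(T^{1/3})$. If $T^{*}$ avoids every zero ordinate by at least $(\log T)^{-A}$, then the standard expansion $\frac{\zeta'}{\zeta}(s)=\sum_{|\gamma-t|\le 1}(s-\rho)^{-1}+O(\log t)$ — a sum of $\ll\log T$ terms, each of modulus $\ll(\log T)^{A}$ — yields
\[
\frac{\zeta'}{\zeta}(u+iT^{*})\ll(\log T)^{A+1}\qquad\Big(\tfrac12\le u\le 2\Big),
\]
which will control the interpolation.

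\textbf{Controlling $\log\zeta$ on a net and interpolating.} I would fix the net $\sigma_{i}=\tfrac12+i(\log T)^{-B}$, $0\le i\le\tfrac12(\log T)^{B}$, with $B>A+1$ (so $\ll(\log T)^{B}$ points), and invoke Selberg's power-moment estimate for $\log\zeta$ in its localized, $\sigma$-uniform form: for $\tfrac12\le\sigma\le 1$ and $k$ up to a small constant multiple of $\log\log T$,
\[
\int_{T}^{T+T^{1/3}}\bigl|\log\zeta(\sigma+it)\bigr|^{2k}\,dt\ \ll\ T^{1/3}\,(Ck\log\log T)^{k}.
\]
This is unconditional; the short-interval version rests on Selberg's approximation of $\log\zeta$ by a Dirichlet polynomial over prime powers of length $x=T^{O(1/k)}$ together with a zero-density bound in $[T,T+T^{1/3}]$. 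Summing over $i$ and applying Chebyshev's inequality, the set of $t$ with $\max_{i}|\log\zeta(\sigma_{i}+it)|>V$ has measure $\ll(\log T)^{B}T^{1/3}(Ck\log\log T)^{k}V^{-2k}$; taking $k=\lfloor\epsilon_{0}\log\log T\rfloor$ for a small fixed $\epsilon_{0}$, and $V=D(\log\log T)^{2}$ with $D$ a suitable constant, this becomes $\ll T^{1/3}\exp(-\epsilon_{0}(\log\log T)^{2}(1+o(1)))=o(T^{1/3})$. Hence some $T^{*}\in[T,T+T^{1/3}]$ satisfies both the zero-avoidance above and $\max_{i}|\log\zeta(\sigma_{i}+iT^{*})|\ll(\log\log T)^{2}$. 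For arbitrary $\sigma\in[\tfrac12,1]$, writing $\log\zeta(\sigma+iT^{*})=\log\zeta(\sigma_{i}+iT^{*})+\int_{\sigma_{i}}^{\sigma}\frac{\zeta'}{\zeta}(u+iT^{*})\,du$ with $\sigma_{i}$ the nearest net point, the integral is $\ll(\log T)^{-B}(\log T)^{A+1}=o(1)$, so $\log|\zeta(\sigma+iT^{*})|\le|\log\zeta(\sigma+iT^{*})|\ll(\log\log T^{*})^{2}$. Together with the range $1\le\sigma\le 2$ this is the first display; exponentiating and using $(\log\log T)^{2}=o(\log T)$ gives $|\zeta(\sigma+iT^{*})|\ll\exp((\log\log T^{*})^{2})\ll_{\epsilon}T^{\epsilon}$.

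\textbf{The main obstacle.} The one substantive input is the localized, $\sigma$-uniform moment bound for $\log\zeta$ with explicit dependence on $k$; the zero count, the $\zeta'/\zeta$ expansion, and the net/Chebyshev/interpolation bookkeeping are routine. The reason one must work with $\log\zeta$ rather than $\zeta$ is that over an interval as short as $[T,T+T^{1/3}]$ the second moment of $\zeta$ itself is too weak — the mean value of a Dirichlet polynomial of length $\asymp\sqrt T$ over such an interval carries an unavoidable $\sqrt T$, which only yields $\log|\zeta(\sigma+iT^{*})|\ll\log T$. If one needed merely the weaker bound $\zeta(\sigma+iT^{*})\ll_{\epsilon}T^{\epsilon}$ that is actually used afterwards, it would already follow from the classical second moment $\int_{T}^{2T}|\zeta(\tfrac12+it)|^{2}\,dt\ll T\log T$ combined with the same net argument carried out over $[T,2T]$, or from a short-interval second-moment estimate.
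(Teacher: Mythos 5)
Your attempt should first be measured against what the paper actually does: the paper does not prove this lemma at all --- its ``proof'' is the citation to Lemma~1 of Ramachandra--Sankaranarayanan, so what you are offering is a reconstruction of a known but nontrivial theorem. Your skeleton (zero-avoidance to control $\zeta'/\zeta$, a net in $\sigma$, Chebyshev applied to moments of $\log\zeta$, interpolation along the horizontal segment, plus the easy range $1\le\sigma\le 2$) is reasonable, but the load-bearing step is exactly the one you assume: the unconditional, $\sigma$-uniform estimate $\int_{T}^{T+T^{1/3}}|\log\zeta(\sigma+it)|^{2k}\,dt\ll T^{1/3}(Ck\log\log T)^{k}$ for $k$ as large as a constant times $\log\log T$. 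This is not a quotable off-the-shelf statement in the form you need, and its proof is not ``routine bookkeeping'': over an interval as short as $T^{1/3}$, handling the error terms in Selberg's approximation of $\log\zeta$ (the contribution of nearby zeros) requires short-interval zero-counting/zero-density input, and this is precisely where the exponent $1/3$ in the lemma comes from; your sketch never engages with why $T^{1/3}$ is admissible, nor with uniformity in $\sigma$ down to $\sigma=\tfrac12$, where $\log\zeta$ has logarithmic singularities at the zeros. In effect, the step you yourself flag as ``the one substantive input'' carries essentially the full depth of the lemma, so as written the proposal has a genuine gap.

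Some smaller points. (i) The large-deviation computation is off: with $k\asymp\epsilon_{0}\log\log T$ and $V=D(\log\log T)^{2}$ one gets $(Ck\log\log T)^{k}V^{-2k}=\exp\bigl(-2k\log\log\log T\,(1+o(1))\bigr)=\exp\bigl(-c\log\log T\cdot\log\log\log T\bigr)$, not $\exp(-\epsilon_{0}(\log\log T)^{2})$; the union bound over the $(\log T)^{B}$ net points still survives because $\log\log\log T\to\infty$, but the stated rate is wrong. (ii) For $1\le\sigma\le 2$ you only bound $\log|\zeta|$ from above via $\zeta(\sigma+it)\ll\log t$; the lemma bounds the complex quantity $\log\zeta$, so near $\sigma=1$ you also need $|\zeta|$ bounded below and the argument controlled (classical zero-free region bounds such as $1/\zeta(\sigma+it)\ll\log t$), which is easy but missing. (iii) The closing claim that the weaker bound $\zeta(\sigma+iT^{*})\ll_{\epsilon}T^{\epsilon}$ ``already follows from the classical second moment plus the same net argument'' is too quick: your interpolation between net points went through $\log\zeta$ and zero-avoidance, and a second moment of $|\zeta|^{2}$ gives no lower bound on $|\zeta|$ at the net points, so that step must be replaced (e.g., by a local-average/subharmonicity argument), not transplanted. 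The cleanest repair is the one the paper takes --- cite Ramachandra--Sankaranarayanan directly --- or else supply a proof or precise reference for the short-interval, $\sigma$-uniform moment estimate on which your whole argument rests.
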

\begin{proof}
	See \cite[Lemma 1]{RamSan}.
\end{proof}
\begin{Lem}\label{zeta}
	If $A\geq 4$ is a fixed number, then for $4\leq A\leq 12$, we have
	\begin{equation}\label{zetamoment}
		\int_{1}^{T}\vert\zeta(\frac{1}{2}+it)\vert^{A}\textup{d}t\ll T^{1+\frac{A-4}{8}+\epsilon}
	\end{equation}
holds uniformly; and for $\frac{1}{2}\leq \sigma\leq 2+\epsilon$
\begin{equation}\label{zetabound}
	\zeta(\sigma+it)\ll (1+\vert t\vert)^{\max\{\frac{13}{42}(1-\sigma),0\}+\epsilon}
\end{equation}
\end{Lem}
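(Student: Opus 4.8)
The plan is to deduce both assertions from standard deep results on the Riemann zeta-function, so that the only genuine computation is an interpolation.

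For the moment bound \eqref{zetamoment}, I would begin from the two classical endpoint estimates: Ingham's fourth power moment $\int_{1}^{T}\left|\zeta\left(\tfrac12+it\right)\right|^{4}\,\mathrm{d}t\ll T^{1+\epsilon}$ and Heath-Brown's twelfth power moment $\int_{1}^{T}\left|\zeta\left(\tfrac12+it\right)\right|^{12}\,\mathrm{d}t\ll T^{2+\epsilon}$. Given $A$ with $4\le A\le 12$, write $A=4(1-\theta)+12\theta$, so that $\theta=\tfrac{A-4}{8}\in[0,1]$, and factor $\left|\zeta\left(\tfrac12+it\right)\right|^{A}=\left|\zeta\left(\tfrac12+it\right)\right|^{4(1-\theta)}\cdot\left|\zeta\left(\tfrac12+it\right)\right|^{12\theta}$. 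Applying H\"older's inequality with exponents $\tfrac{1}{1-\theta}$ and $\tfrac{1}{\theta}$ gives
\[
\int_{1}^{T}\left|\zeta\left(\tfrac12+it\right)\right|^{A}\,\mathrm{d}t\ll \left(T^{1+\epsilon}\right)^{1-\theta}\left(T^{2+\epsilon}\right)^{\theta}=T^{1+\theta+\epsilon}=T^{1+\frac{A-4}{8}+\epsilon},
\]
uniformly for $A\in[4,12]$, which is \eqref{zetamoment}; the degenerate values $\theta=0,1$ are just the two endpoint bounds themselves.

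For the pointwise bound \eqref{zetabound}, I would split the range into $1\le\sigma\le 2+\epsilon$ and $\tfrac12\le\sigma\le1$. On the former, $\zeta(\sigma+it)\ll\log(2+|t|)\ll(1+|t|)^{\epsilon}$, which agrees with the stated exponent since $\max\{\tfrac{13}{42}(1-\sigma),0\}=0$ there. On the latter, I would invoke the subconvexity bound $\zeta\left(\tfrac12+it\right)\ll(1+|t|)^{13/84+\epsilon}$ together with $\zeta(1+it)\ll(1+|t|)^{\epsilon}$, and then apply the Phragm\'en--Lindel\"of convexity principle: the exponent is a convex, hence linear, function of $\sigma$ interpolating between $\tfrac{13}{84}$ at $\sigma=\tfrac12$ and $0$ at $\sigma=1$, i.e. $\tfrac{13}{84}\cdot\frac{1-\sigma}{1/2}=\tfrac{13}{42}(1-\sigma)$. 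Combining the two ranges yields \eqref{zetabound}.

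The main difficulty here is entirely imported: it is Heath-Brown's twelfth-moment estimate and the subconvex bound for $\zeta\left(\tfrac12+it\right)$ on the critical line, which are used as black boxes. Everything that remains — the H\"older interpolation for the moment estimate and the Phragm\'en--Lindel\"of interpolation for the pointwise bound — is routine, and indeed the whole lemma can alternatively be quoted directly from standard monographs on the zeta-function.
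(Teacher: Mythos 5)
Your proposal is correct. The paper itself gives no argument for this lemma: it simply quotes Ivi\'c's book (Theorem 8.3) for the moment estimate \eqref{zetamoment} and Bourgain for the pointwise bound \eqref{zetabound}. What you supply is exactly the standard derivation sitting behind those citations: H\"older interpolation $A=4(1-\theta)+12\theta$, $\theta=\frac{A-4}{8}$, between Ingham's fourth moment and Heath-Brown's twelfth moment reproduces \eqref{zetamoment} (this is in fact how Ivi\'c's Theorem 8.3 is proved), and Phragm\'en--Lindel\"of interpolation between Bourgain's exponent $\frac{13}{84}$ at $\sigma=\frac12$ and the (essentially bounded) behaviour at $\sigma=1$ gives the linear exponent $\frac{13}{42}(1-\sigma)$ in \eqref{zetabound}. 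So your route is not genuinely different, it just makes the deep black-box inputs explicit instead of hiding them behind references, which is arguably more informative. One small caveat, which is a defect of the lemma as stated rather than of your argument: the bound \eqref{zetabound} cannot hold literally at $\sigma+it$ near the pole $s=1$ (your claim $\zeta(\sigma+it)\ll\log(2+\vert t\vert)$ for $\sigma$ near $1$ and $\vert t\vert$ small fails there); both the statement and its use in the paper tacitly assume $\vert t\vert\geq 1$, e.g.\ $t=T^{*}$ large on the horizontal segments, so nothing is lost, but it is worth flagging the restriction.
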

\begin{proof}
	For (\ref{zetamoment}), see \cite[Theorem 8.3]{Ivic}, and for (\ref{zetabound}), see \cite{Bourgain}.
\end{proof}
\begin{Lem}\label{GL1}
	Let $\chi$ be a primitive character modulo $q$. Then for $q\ll T^{2}$, we have
	\begin{equation}\label{GL1bound}
		L(\sigma+iT,\chi)\ll (q(1+\vert T\vert))^{\max\{\frac{1}{3}(1-\sigma),0\}+\epsilon}
	\end{equation}
holds uniformly for $\frac{1}{2}\leq\sigma\leq 2$; and
\begin{equation}\label{GL1moment}
	\int_{1}^{T}\vert L(\sigma+it,\chi)\vert^{4}\textup{d}t\ll (qT)^{2(1-\sigma)+\epsilon}
\end{equation}
uniformly for $\frac{1}{2}\leq \sigma\leq 1+\epsilon$ and $T\geq 1$.
\end{Lem}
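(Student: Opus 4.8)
The two estimates are of quite different nature, and I would prove them separately: the pointwise bound (\ref{GL1bound}) by complex-analytic interpolation against known endpoint bounds, and the fourth-moment bound (\ref{GL1moment}) by a mean value estimate for Dirichlet polynomials. They are the $L(s,\chi)$-counterparts of the $\zeta$-estimates recalled in Lemma \ref{zeta}, and I would follow the same pattern; for the first estimate the hypothesis $q\ll T^{2}$ is exactly what allows one to fold the modulus into the analytic conductor and treat $L(s,\chi)$ essentially as one treats $\zeta(s)$.

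For (\ref{GL1bound}) the plan is to apply the Phragm\'{e}n--Lindel\"{o}f convexity principle to $L(s,\chi)$ on the strip $\tfrac{1}{2}\le\Re s\le1$, where it is of finite order in $t$. On the edge $\Re s=1$ one has $L(1+it,\chi)\ll\log\!\bigl(q(2+|t|)\bigr)\ll\bigl(q(1+|t|)\bigr)^{\epsilon}$, e.g.\ from the Euler product and Mertens-type estimates, or by partial summation from the P\'{o}lya--Vinogradov bound for the partial sums of $\chi$. On the line $\Re s=\tfrac{1}{2}$ the input is the hybrid Weyl-type subconvexity bound
\[
L\!\left(\tfrac{1}{2}+it,\chi\right)\ll\bigl(q(1+|t|)\bigr)^{1/6+\epsilon}\qquad\text{for }q\ll(1+|t|)^{2},
\]
the analogue for $L(s,\chi)$ of Weyl's classical bound $\zeta(\tfrac{1}{2}+it)\ll(1+|t|)^{1/6+\epsilon}$: via the approximate functional equation the problem reduces to bounding $\sum_{n\le N}\chi(n)n^{-1/2-it}$ with $N\ll q(1+|t|)$, and, after removing $n^{-1/2}$ by partial summation and writing $n^{-it}=e^{-it\log n}$, one applies the van der Corput $A$- and $B$-processes (Weyl differencing, then Poisson summation and stationary phase), keeping every exponential-sum estimate uniform in $q$; the clean statement above is available in the literature, in the style of the hybrid bounds of Heath-Brown and of Huxley--Watt. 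Phragm\'{e}n--Lindel\"{o}f then interpolates these two estimates: the resulting exponent is linear in $\sigma$, equal to $1/6$ at the left edge and $0$ at the right edge, hence equal to $\tfrac{1}{3}(1-\sigma)$ throughout $\tfrac{1}{2}\le\sigma\le1$, which is the claim up to $\epsilon$; and for $1\le\sigma\le2$ the edge estimate at $\Re s=1$ already gives $\bigl(q(1+|t|)\bigr)^{\epsilon}$, matching $\max\{\tfrac{1}{3}(1-\sigma),0\}=0$.

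For (\ref{GL1moment}) I would write $|L(\sigma+it,\chi)|^{4}=|L(\sigma+it,\chi)^{2}|^{2}$ and use $L(s,\chi)^{2}=\sum_{n\ge1}d(n)\chi(n)n^{-s}$. A smoothed approximate functional equation for $L(s,\chi)^{2}$, whose analytic conductor is $\asymp q^{2}(1+|t|)^{2}$, represents $L(\sigma+it,\chi)^{2}$ as a Dirichlet polynomial $\sum_{n\le N}d(n)\chi(n)n^{-\sigma-it}$ of length $N\ll q(1+|t|)$ plus a dual Dirichlet polynomial of the same length. The Montgomery--Vaughan mean value theorem
\[
\int_{1}^{T}\Bigl|\sum_{n\le N}a_{n}n^{-it}\Bigr|^{2}\textup{d}t\ll\sum_{n\le N}|a_{n}|^{2}(T+n),
\]
applied with $a_{n}=d(n)\chi(n)n^{-\sigma}$, then gives $\ll\bigl(T+q(1+T)\bigr)\sum_{n\le q(1+T)}d(n)^{2}n^{-2\sigma}\ll\bigl(q(1+T)\bigr)^{1+\epsilon}$ at $\sigma=\tfrac{1}{2}$. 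For $\sigma$ in the stated range one runs the same argument with the approximate functional equation and the mean value theorem adjusted to the line $\Re s=\sigma$ --- the shorter Dirichlet polynomials and the smaller arithmetic factor $\sum d(n)^{2}n^{-2\sigma}$ supply the $\sigma$-dependent saving --- or, equivalently, one interpolates the endpoint case against the bound $\int_{1}^{T}|L(\sigma+it,\chi)|^{4}\,\textup{d}t\ll T\bigl(q(1+T)\bigr)^{\epsilon}$ valid for $\sigma\ge1$. Since $\chi$ is fixed here, no averaging over characters --- and hence no large-sieve inequality --- is needed, though that would be the natural device were an average over $q$ or over $\chi$ present.

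The step I expect to be the main obstacle is the hybrid Weyl bound on $\Re s=\tfrac{1}{2}$: transporting the van der Corput treatment of $\zeta(\tfrac{1}{2}+it)$ to the character-twisted sums $\sum_{n\le N}\chi(n)e^{-it\log n}$ while keeping all estimates uniform in $q$ --- so that one genuinely obtains $\bigl(q(1+|t|)\bigr)^{1/6}$ and not $q^{c}(1+|t|)^{1/6}$ with some $c>0$ --- is the delicate point, and it is here that I would rely on the established hybrid subconvexity literature. The remaining ingredients (Phragm\'{e}n--Lindel\"{o}f, the approximate functional equation, the Montgomery--Vaughan inequality) are entirely routine.
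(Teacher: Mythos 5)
Your overall route coincides with the paper's, because the paper offers no argument beyond a citation: (\ref{GL1bound}) is attributed to Heath-Brown's hybrid bounds (which is exactly the hybrid Weyl-type estimate $L(\tfrac12+it,\chi)\ll(q(1+\vert t\vert))^{1/6+\epsilon}$ in the range $q\ll T^{2}$ that you import from the literature, and is where the hypothesis $q\ll T^{2}$ comes from), and (\ref{GL1moment}) is attributed to Perelli, whose general mean-value theorem is proved precisely by your mechanism of an approximate functional equation for the degree-two $L$-function $L(s,\chi)^{2}$ followed by the Montgomery--Vaughan mean value theorem. For (\ref{GL1bound}) your plan (half-line hybrid bound, $\log$-bound on $\Re s=1$, Phragm\'en--Lindel\"of, trivial treatment of $1\le\sigma\le2$) is correct and is the standard way to pass from the cited half-line bound to the stated range of $\sigma$.

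For (\ref{GL1moment}), however, the step from $\sigma=\tfrac12$ to general $\sigma$ is a genuine soft spot. Carrying out Montgomery--Vaughan correctly (keeping the factor $T+n$ inside the sum) for the polynomial of length $N\asymp q(1+\vert t\vert)$ with coefficients $d(n)\chi(n)n^{-\sigma}$, together with the dual sum weighted by $\vert X(\sigma+it)\vert\asymp(q\vert t\vert)^{2(\frac12-\sigma)}$, gives $T(qT)^{\epsilon}+(qT)^{2(1-\sigma)+\epsilon}$, not $(qT)^{2(1-\sigma)+\epsilon}$ alone; the extra term cannot be discarded, since for $\sigma$ bounded away from $\tfrac12$ and $q$ fixed the left-hand side of (\ref{GL1moment}) is $\gg T$ while the stated right-hand side is $o(T)$, so the inequality as literally written fails there --- what Perelli and Matsumoto actually prove is the version with the additional $T$-term, exactly as recorded in (\ref{GLnmoment}) of Lemma \ref{GLn}. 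Your fallback of interpolating the $\sigma=\tfrac12$ case against the bound for $\sigma\ge1$ is also insufficient: convexity in $\sigma$ yields $(qT)^{2(1-\sigma)}T^{2\sigma-1+\epsilon}$, weaker than claimed. (Also, the AFE length is dictated by the conductor and does not shrink with $\sigma$; the saving comes only from the coefficient weights and the $X$-factor.) None of this affects the application, since the lemma is used only at $\sigma=\tfrac12+\epsilon$, where $T(qT)^{\epsilon}\le(qT)^{1+\epsilon}$ and the estimate for $I_{4}$ is unchanged; but as a proof of the lemma as stated, this step is where your argument (like the statement itself, inherited from the cited source) does not close.
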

\begin{proof}
	The first result follows from D. R. Heath-Brown \cite{Heath}, and the second result follows from Perelli \cite{Perelli}.
\end{proof}
\begin{Lem}\label{GL3}
	Let $f\in H_{k}$ and $\chi$ be a primitive character modulo $q$. Then for $q\ll T^{2}$, we have
	\begin{equation}\label{GL3bound}
	L(\sigma+iT,\textup{sym}^2 f)\ll (1+\vert T\vert)^{\max\{\frac{6}{5}(1-\sigma),0\}+\epsilon}
	\end{equation}
and
\begin{equation}\label{GL3qbound}
	L(\sigma+iT,\textup{sym}^2 f\otimes\chi)\ll (q(1+\vert T\vert))^{\max\{\frac{67}{46}(1-\sigma),0\}+\epsilon}
\end{equation}
	uniformly for $\frac{1}{2}\leq \sigma\leq 2$ and $\vert T\vert\geq 1$;
	\begin{equation}\label{GL3qmoment}
		\int_{1}^{T}\vert L(\sigma+iT,\textup{sym}^2 f\otimes\chi)\vert^{4}\textup{d}t\ll (qT)^{6(1-\sigma)+\epsilon}
	\end{equation}
uniformly for $\frac{1}{2}\leq \sigma\leq 1+\epsilon$ and $T\geq 1$.
\end{Lem}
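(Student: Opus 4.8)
The plan is to obtain all three estimates from the standard analytic theory of the symmetric square lift together with known bounds on the central line, transferring these to the whole strip $\tfrac12\le\sigma\le 2$ by the Phragm\'en--Lindel\"of convexity principle. Since $f\in H_{\kappa}$ has level one it is non-dihedral, so by Gelbart--Jacquet the representation $\mathrm{sym}^{2}f$ is a self-dual cuspidal automorphic representation of $GL(3)/\mathbb{Q}$; hence $L(s,\mathrm{sym}^{2}f)$, and for $\chi$ primitive modulo $q$ also $L(s,\mathrm{sym}^{2}f\otimes\chi)$, is entire of degree three, satisfies a functional equation $s\leftrightarrow 1-s$, and has analytic conductor $\asymp(1+|t|)^{3}$ respectively $\asymp q^{3}(1+|t|)^{3}$ at $s=\sigma+it$. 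In particular each is of finite order in every vertical strip, which is all the convexity step needs.

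For the pointwise bounds (\ref{GL3bound}) and (\ref{GL3qbound}) I would argue in three steps. On the line $\sigma=1+\epsilon$ the defining Dirichlet series converge absolutely, giving $L(\sigma+iT,\mathrm{sym}^{2}f)\ll(1+|T|)^{\epsilon}$ and $L(\sigma+iT,\mathrm{sym}^{2}f\otimes\chi)\ll(q(1+|T|))^{\epsilon}$. On the line $\sigma=\tfrac12$ I would quote the $t$-aspect subconvexity bound $L(\tfrac12+it,\mathrm{sym}^{2}f)\ll(1+|t|)^{3/5+\epsilon}$ and the hybrid subconvexity bound $L(\tfrac12+it,\mathrm{sym}^{2}f\otimes\chi)\ll(q(1+|t|))^{67/92+\epsilon}$; these are the only genuinely deep inputs, and are taken from the literature (see \cite{VenSan} and the references cited there). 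Since both functions are of finite order in $\tfrac12\le\sigma\le 1+\epsilon$, Phragm\'en--Lindel\"of interpolates these endpoint bounds linearly in $\sigma$ and yields exponents $\tfrac{6}{5}(1-\sigma)+\epsilon$ and $\tfrac{67}{46}(1-\sigma)+\epsilon$ respectively; the hypothesis $q\ll T^{2}$ ensures $q^{3}(1+|T|)^{3}$ and $(q(1+|T|))^{3}$ agree up to a factor $(qT)^{\epsilon}$, so the $q$- and $t$-aspects are handled jointly. For $\sigma\ge 1$ one simply retains the $(q(1+|T|))^{\epsilon}$ bound, which is what the $\max\{\cdot\,,0\}$ in the exponent records, and the sliver $1\le\sigma\le 1+\epsilon$ contributes at most $(qT)^{O(\epsilon)}$ by the same convexity estimate.

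For the fourth moment (\ref{GL3qmoment}) the substantive case is $\sigma=\tfrac12$, where one needs $\int_{1}^{T}|L(\tfrac12+it,\mathrm{sym}^{2}f\otimes\chi)|^{4}\,dt\ll(qT)^{3+\epsilon}$. I would deduce this from the approximate functional equation for the degree-six square $L(s,\mathrm{sym}^{2}f\otimes\chi)^{2}$, whose conductor at $\tfrac12+it$ is $\asymp q^{6}(1+|t|)^{6}$, so that the effective Dirichlet polynomial has length $\asymp q^{3}T^{3}$; writing $|L|^{4}=|L^{2}|^{2}$, opening the square, and applying the classical mean value estimate $\int_{1}^{T}|\sum_{n\le N}a_{n}n^{-it}|^{2}\,dt\ll(T+N)\sum_{n\le N}|a_{n}|^{2}$ with the coefficient bound $\sum_{n\le N}|\sum_{ab=n}\lambda_{\mathrm{sym}^{2}f}(a)\lambda_{\mathrm{sym}^{2}f}(b)|^{2}n^{-1}\ll(\log N)^{O(1)}$ (which follows from $\mathrm{sym}^{2}f\times\mathrm{sym}^{2}f=\mathrm{sym}^{4}f\boxplus\mathrm{sym}^{2}f\boxplus\mathbf{1}$) gives the claimed bound; this is the $GL(3)$ analogue of the Perelli-type argument underlying (\ref{GL1moment}) in Lemma \ref{GL1}. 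The general $\sigma$ in $\tfrac12\le\sigma\le 1+\epsilon$ then follows by interpolating this estimate against the trivial bound $\int_{1}^{T}|L(1+\epsilon+it,\mathrm{sym}^{2}f\otimes\chi)|^{4}\,dt\ll T^{1+\epsilon}$ by way of the convexity of $\sigma\mapsto\log\int_{1}^{T}|L(\sigma+it,\mathrm{sym}^{2}f\otimes\chi)^{2}|^{2}\,dt$, a three-lines estimate for $L^{2}$-means, which produces the exponent $6(1-\sigma)+\epsilon$.

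The main obstacle is not the organization above — Phragm\'en--Lindel\"of interpolation and the approximate functional equation are routine — but securing the central-line inputs with the precise exponents $3/5$, $67/92$, and the $GL(3)$ fourth moment exponent $3$, uniformly in the conductor; these depend on the deep machinery for automorphic $L$-functions on $GL(3)$ and their Rankin--Selberg twists ($GL(3)$ Voronoi summation, the delta-symbol method, amplification, and large-sieve inequalities), so the genuine content of the lemma is the correct attribution and bookkeeping of those results rather than any new argument.
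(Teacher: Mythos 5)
Your proposal is correct and matches the paper's (very brief) proof in approach: the paper likewise obtains (\ref{GL3bound}) and (\ref{GL3qbound}) by Phragm\'en--Lindel\"of interpolation from the central-line subconvexity bounds of Lin--Nunes--Qi \cite{lin} (giving $t^{3/5+\epsilon}$) and Huang \cite{Huang} (giving $(q(1+|t|))^{67/92+\epsilon}$), and it quotes Perelli \cite{Perelli} for the fourth moment (\ref{GL3qmoment}), which is exactly the approximate-functional-equation plus mean-value-theorem argument you sketch for the degree-six square. The only difference is presentational: you re-derive the Perelli-type mean value estimate (and supply the Rankin--Selberg coefficient bound) rather than citing it.
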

\begin{proof}
	(\ref{GL3bound}) and (\ref{GL3qbound}) follow from Phragm\'{e}n-Lindel\"{o}f convexity principle and the work of Lin, Nunes and Qi \cite{lin} and Huang \cite{Huang} respectively. (\ref{GL3qmoment}) follows from Perelli \cite{Perelli}.   
\end{proof}
\begin{Lem}\label{GLn}
	Let $\chi$ be a primitive character modulo $q$. For the general $L$-functions $\mathcal{L}_{m,n}^{d}(s,\chi)$ with degree $2A$, for $1/2\leq\sigma\leq 1$ and any $\epsilon>0$, we have
	\begin{align}\label{GLnmoment}
		&\int_{0}^{T}\vert \mathcal{L}_{m,n}^{d}(\sigma+it,\chi)\vert^2\textup{d}t\ll T(qT)^{\omega(1/2)\epsilon}+(qT)^{2A(1-\sigma)+\epsilon},
	\end{align}
	where $\omega(1/2)=1$ or $0$ according as $\sigma=1/2$ or $1/2<\sigma\leq 1$, respectively. Furthermore,
	\begin{equation}\label{GLnbound}
		\mathcal{L}_{m,n}^{d}(\sigma+it,\chi)\ll (q\vert t\vert+1)^{\max\{A(1-\sigma),0\}+\epsilon},
	\end{equation}
	uniformly for $\epsilon \leq \sigma \leq 1+\epsilon$.
\end{Lem}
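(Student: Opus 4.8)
The plan is to treat $\mathcal{L}_{m,n}^{d}(s,\chi)$ as a generic member of the family of $L$-functions produced by the main theorems, namely a finite product of $L(s,\chi)$ with twisted symmetric-power $L$-functions $L(s,\textup{sym}^{2i}f\otimes\chi)$. Since $\chi$ is primitive of modulus $q\geq 100$, and hence non-principal, all of these factors are entire, so $\mathcal{L}_{m,n}^{d}(s,\chi)$ is an entire $L$-function of degree $2A$, with conductor $\ll q^{2A}$, an absolutely convergent Dirichlet series $\sum_{n}a(n)\chi(n)n^{-s}$ for $\Re s>1$ (coefficients multiplicative, supported on $(n,q)=1$, with $a(n)\ll n^{\epsilon}$), and a functional equation of the standard shape relating $s$ to $1-s$ whose gamma factor has degree $2A$; in particular its analytic conductor at height $t$ is $\asymp q^{2A}(|t|+3)^{2A}$. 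The only auxiliary input I need is the crude mean square of the coefficients,
\[
\sum_{n\leq x}|a(n)|^{2}\ll x(\log x)^{O(1)},
\]
with an implied constant independent of $q$ (it suffices to use $|a(n)\chi(n)|^{2}\leq|a(n)|^{2}$ with the $a(n)$ built from fixed Hecke data; the bound itself follows from the holomorphy and polynomial growth of the Rankin--Selberg square $\mathcal{L}_{m,n}^{d}\times\overline{\mathcal{L}_{m,n}^{d}}$ together with a Tauberian argument).

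For the convexity bound (\ref{GLnbound}) I would argue by Phragm\'{e}n--Lindel\"{o}f. On $\sigma=1+\epsilon$ absolute convergence gives $\mathcal{L}_{m,n}^{d}(\sigma+it,\chi)\ll_{\epsilon}1$, while feeding $s=-\epsilon+it$ through the functional equation and estimating the ratio of gamma factors by Stirling yields
\[
\mathcal{L}_{m,n}^{d}(-\epsilon+it,\chi)\ll\bigl(q(|t|+1)\bigr)^{2A(\frac{1}{2}+\epsilon)}\ll\bigl(q(|t|+1)\bigr)^{A+\epsilon}.
\]
Because $\mathcal{L}_{m,n}^{d}(s,\chi)$ is entire and of finite order in the strip $-\epsilon\leq\Re s\leq 1+\epsilon$, interpolating the two bounds gives $\mathcal{L}_{m,n}^{d}(\sigma+it,\chi)\ll(q(|t|+1))^{A(1-\sigma)+\epsilon}$ there, and discarding the estimate where it exceeds the trivial one (i.e.\ for $\sigma\geq 1$) produces the exponent $\max\{A(1-\sigma),0\}$.

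For the mean square (\ref{GLnmoment}) I would use the approximate functional equation together with the mean value theorem for Dirichlet polynomials. For $|t|\asymp T$, the approximate functional equation writes $\mathcal{L}_{m,n}^{d}(\sigma+it,\chi)$, up to $O(\log T)$ dyadic blocks and a bounded factor, as
\[
\sum_{n\sim N}\frac{a(n)\chi(n)}{n^{\sigma+it}}\ +\ \bigl(q(|t|+1)\bigr)^{A(1-2\sigma)}\sum_{n\sim N'}\frac{\overline{a(n)\chi(n)}}{n^{1-\sigma-it}},\qquad N,N'\ll(qT)^{A+\epsilon},
\]
the $t$-power in front of the dual sum being the modulus of the $\chi$-factor of the functional equation. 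Applying Montgomery--Vaughan,
\[
\int_{0}^{T}\Bigl|\sum_{n\leq M}\frac{c(n)}{n^{w}}\Bigr|^{2}\textup{d}t=\sum_{n\leq M}\frac{|c(n)|^{2}}{n^{2\Re w}}\bigl(T+O(n)\bigr),
\]
to each block and inserting the coefficient mean square by partial summation, the main sum contributes $\ll T(qT)^{\omega(1/2)\epsilon}+N^{2-2\sigma+\epsilon}$, the logarithm surviving only at $\sigma=1/2$ (there $\sum_{n\leq N}|a(n)|^{2}/n$ grows like a power of $\log N$, whereas for $\sigma>1/2$ the series $\sum_{n}|a(n)|^{2}n^{-2\sigma}$ converges to a $q$-independent constant). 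For the dual sum, after multiplication by $(q(|t|+1))^{2A(1-2\sigma)}$ and using $N'\asymp(q(|t|+1))^{2A}/N$, the two unknowns combine to $\ll T(qT)^{A(1-2\sigma)+\epsilon}+(qT)^{2A}N^{-2\sigma+\epsilon}$; since $\sigma\geq 1/2$ the first term is $\ll T(qT)^{\omega(1/2)\epsilon}$ (the power of $\log$ being confined to a neighbourhood of $\sigma=1/2$). Taking the balanced length $N=N'=(qT)^{A}$ makes $N^{2-2\sigma}=(qT)^{2A}N^{-2\sigma}=(qT)^{2A(1-\sigma)}$, and summing over the $O(\log T)$ blocks gives (\ref{GLnmoment}).

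The step I expect to be the main obstacle is the conductor bookkeeping: one must verify that the twist of each $\textup{sym}^{2i}f$ by the primitive character $\chi\pmod q$ has conductor $q^{2i+1}$ and not a larger power of $q$, so that the degree-$2A$ product has conductor exactly $q^{2A}$ and root analytic conductor $\asymp(q(|t|+1))^{A}$; this is precisely what fixes the length $\ll(qT)^{A+\epsilon}$ in the approximate functional equation, the size $(q(|t|+1))^{A(1-2\sigma)}$ of the dual-sum factor, and the exponent $A(1-\sigma)$ in the convexity bound. Once that normalization is in place, the Phragm\'{e}n--Lindel\"{o}f interpolation, the approximate functional equation, and the Montgomery--Vaughan estimate are all routine, and the asymmetry encoded by $\omega(1/2)$ is nothing more than the standard extra logarithm in the second moment of the coefficient sum on the critical line.
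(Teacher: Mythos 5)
Your proposal is essentially correct, but note that the paper does not prove this lemma at all: it simply cites Matsumoto and Perelli for the mean-square estimate (\ref{GLnmoment}) and Jiang--L\"u for the convexity bound (\ref{GLnbound}). What you have written is a reconstruction of the standard argument that underlies those references: Phragm\'en--Lindel\"of between $\sigma=1+\epsilon$ (absolute convergence) and $\sigma=-\epsilon$ (functional equation plus Stirling) for the pointwise bound, and an approximate functional equation of length about the square root of the analytic conductor $\asymp(q(1+|t|))^{2A}$ combined with the Montgomery--Vaughan mean value theorem for the second moment, the $\omega(1/2)$ dichotomy being exactly the extra logarithm from $\sum_{n\le N}|a(n)|^{2}n^{-1}$ on the critical line. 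Two small remarks on scope rather than correctness. First, the lemma as stated concerns Perelli-type ``general $L$-functions'' $\mathcal{L}_{m,n}^{d}(s,\chi)$ defined axiomatically (Dirichlet series with $a(n)\ll n^{\epsilon}$, Euler product, degree-$2A$ functional equation with conductor a power of $q$); you instead instantiate the specific products of $L(s,\chi)$ and $L(s,\textup{sym}^{2i}f\otimes\chi)$ that the paper actually uses. Your argument only ever invokes the axioms (entireness for primitive $\chi$, the functional equation with conductor $q^{2A}$, the Ramanujan bound, and a Rankin--Selberg mean square for the coefficients), so it covers the general statement as well as the application, and your ``main obstacle'' -- checking that $\textup{sym}^{2i}f\otimes\chi$ for a level-one form and primitive $\chi\bmod q$ has conductor $q^{2i+1}$, hence total conductor $q^{2A}$ -- is indeed the only conductor bookkeeping needed and is standard. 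Second, the constant implicit in the term $T(qT)^{\omega(1/2)\epsilon}$ is not uniform as $\sigma\to 1/2^{+}$ (the series $\sum|a(n)|^{2}n^{-2\sigma}$ blows up there); this is also true of the cited results and is harmless for the paper's use, where the lemma is only applied at $\sigma=\frac12+\epsilon$, but it is worth stating explicitly if you write the proof out in full.
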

\begin{proof}
	(\ref{GLnmoment}) is due to Mastsumoto \cite[Section 3]{Matsumoto}, which is a supplement of Perelli \cite[Thoerem 4]{Perelli}. (\ref{GLnbound}) is given by Jiang and L\"{u} \cite[Lemma 2.4]{Jiang}.
\end{proof}
\section{Proof of Theorem \ref{Fouriersum}}
Let $\chi$ be any Dirichlet character modulo $q$. By the orthogonality we get
\begin{align*}
	\sum_{n\leq x+1\atop n\equiv 1(q)}\lambda_{\textup{sym}^{j}f}^{2}(n)=&\frac{1}{\phi(q)}\sum_{\chi(q)}\sum_{n\leq x+1}\lambda_{\textup{sym}^{j}f}^{2}(n)\chi(n)\\
	=&\frac{1}{\phi(q)}\sum_{n\leq x+1}\lambda_{\textup{sym}^{j}f}^{2}(n)\chi_{0}(n)+\frac{1}{\phi(q)}\sum_{n\leq x+1}\sum_{\chi\neq \chi_{0}\atop \chi \textup{ is primitvie}}\lambda_{\textup{sym}^{j}f}^{2}(n)\chi(n)\\
	&+\frac{1}{\phi(q)}\sum_{n\leq x+1}\sum_{\chi\neq \chi_{0}\atop \chi \textup{ is non-primitvie}}\lambda_{\textup{sym}^{j}f}^{2}(n)\chi(n)\\
	=&:\frac{1}{\phi(q)}\big(\textstyle{\sum_{1}+\sum_{2}+\sum_{3}}\big).
\end{align*}
For $\sum_{1}$, by Lemma \ref{fenjie} and applying Perron's formula, we have
\begin{align*}
	\sum_{n\leq x+1}\lambda_{\textup{sym}^{j}f}^{2}(n)\chi_{0}(n)=&\frac{1}{2\pi i}\int_{1+\epsilon-iT}^{1+\epsilon+iT}F_{j}(s,\chi_{0})\frac{(x+1)^{s}}{s}\textup{d}s+O\left(\frac{x^{1+\epsilon}}{T}\right).
\end{align*}
Shifting the line of integration to $\Re s=\eta:=\frac{1}{2}+\epsilon$ and using Cauchy's residue theorem, we obtain
\begin{align*}
	\sum_{n\leq x+1}\lambda_{\textup{sym}^{j}f}^{2}(n)\chi_{0}(n)=&\Res_{s=1}F_{j}(s,\chi_{0})\frac{(x+1)^{s}}{s}\\
	&+\frac{1}{2\pi i}\left\{\int_{\eta-iT}^{\eta+iT}+\int_{\eta+iT}^{1+iT}+\int_{1-iT}^{\eta-iT}\right\}F_{j}(s,\chi_{0})\frac{(x+1)^{s}}{s}\textup{d}s+O\left(\frac{x^{1+\epsilon}}{T}\right)\\
	=&:c_{f,j}\frac{\phi(q)}{q}x+I_{1}+I_{2}+I_{3}+O\left(\frac{x^{1+\epsilon}}{T}\right).
\end{align*}
Here 
\[
c_{f,j}=\prod_{n=1}^{j}L(1,\text{sym}^{2n}\otimes\chi_{0})H_{j}(1).
\]
For $I_{1}$, by (\ref{zetamoment}), (\ref{GL3bound}) and (\ref{GLnmoment}), along with  H\"{o}lder inequality, we obtain
\begin{align*}
	I_{1}=&\frac{1}{2\pi i}\int_{\eta-iT}^{\eta+iT}\zeta(s)\prod_{p\mid q}\left(1-\frac{1}{p^{s}}\right)L(s,\textup{sym}^{2}f\otimes\chi_{0})\prod_{n=2}^{j}L(s,\textup{sym}^{2n}f\otimes\chi_{0})H_{j}(s)\frac{(x+1)^{s}}{s}\textup{d}s\\
	\ll&\int_{-T}^{T}\Big\vert\zeta(\frac{1}{2}+\epsilon+it)L(\frac{1}{2}+\epsilon+it,\textup{sym}^{2}f\otimes\chi_{0})\prod_{n=2}^{j}L(\frac{1}{2}+\epsilon+it,\textup{sym}^{2n}f\otimes\chi_{0})\frac{x^{\frac{1}{2}+\epsilon}}{t}\Big\vert\textup{d}s\\
	\ll& x^{\frac{1}{2}+\epsilon}\log T\max_{1\leq T_{1}\leq T}\Bigg\{\frac{1}{T_{1}}\left(\int_{T_1}^{2T_{1}}\vert\zeta(\frac{1}{2}+\epsilon+it)\vert^{12}\textup{d}t\right)^{1/12}\left(\int_{T_1}^{2T_{1}}\vert L(\frac{1}{2}+\epsilon+it,\textup{sym}^{2}f)\vert^{12/5}\textup{d}t\right)^{5/12}\\
	&\times\left(\int_{T_1}^{2T_{1}}\vert\prod_{n=2}^{j}L(\frac{1}{2}+\epsilon+it,\textup{sym}^{2n}f)\vert^{2}\textup{d}t\right)^{1/2}\Bigg\}+x^{\frac{1}{2}+\epsilon}\\
	\ll&x^{\frac{1}{2}+\epsilon}T^{2\cdot\frac{1}{12}+(\frac{2}{5}\cdot\frac{6}{5}\cdot\frac{1}{2}+\frac{3}{2})\cdot\frac{5}{12}+((j+1)^{2}-4)\cdot\frac{1}{2}\cdot\frac{1}{2}-1+\epsilon}\\
	\ll&x^{\frac{1}{2}+\epsilon}T^{\frac{1}{4}(j+1)^{2}-\frac{133}{120}+\epsilon}.
\end{align*}
For the integrals over the horizontal segments $I_{2}$ and $I_{3}$, by (\ref{zetashuiping}), (\ref{GL3bound}) and (\ref{GLnbound}), we have
\begin{align*}
	I_{2}+I_{3}&\ll \int_{1/2+\epsilon}^{1+\epsilon}x^{\sigma}\Big\vert \zeta(\sigma+it)\prod_{n=1}^{j}L(\text{sym}^{2n}f,\sigma+it)\Big\vert
	T^{-1}\textup{d}\sigma\notag\\
	&\ll\max_{1/2+\epsilon\leq \sigma\leq1+\epsilon}x^{\sigma}T^{(\frac{6}{5}+\frac{1}{2}((j+1)^{2}-4))
		(1-\sigma)+\epsilon}
	\notag T^{-1}\\
	&\ll \frac{x^{1+\epsilon}}{T}+x^{\frac{1}{2}+\epsilon}T^{\frac{1}{4}(j+1)^2-\frac{7}{5}+\epsilon}.
\end{align*}
Hence
\begin{align*}
	\sum_{n\leq x+1}\lambda_{\textup{sym}^{j}f}^{2}(n)\chi_{0}(n)=c_{f,j}\frac{\phi(q)}{q}x+O\left(x^{\frac{1}{2}+\epsilon}T^{\frac{1}{4}(j+1)^{2}-\frac{133}{120}+\epsilon}\right)+O\left(\frac{x^{1+\epsilon}}{T}\right).
\end{align*}
Now for $\sum_{2}$ and $\sum_{3}$, by Lemma \ref{induce}, we just need to consider the sum $\sum_{2}$. By Perron's formula we get
\begin{align*}
	\sum_{n\leq x+1,\chi\neq \chi_{0}\atop \chi \textup{ is primitvie}}\lambda_{\textup{sym}^{j}f}^{2}(n)\chi(n)=\frac{1}{2\pi i}\int_{1+\epsilon-iT}^{1+\epsilon+iT}F_{j}(s,\chi)\frac{(x+1)^{s}}{s}\textup{d}s+O\left(\frac{x^{1+\epsilon}}{T}\right).
\end{align*}
Moving the line of integration to $\Re s=\eta=:\frac{1}{2}+\epsilon$ and using Cauchy's residue theorem, we have
\begin{align*}
	\sum_{n\leq x+1,\chi\neq \chi_{0}\atop \chi \textup{ is primitvie}}\lambda_{\textup{sym}^{j}f}^{2}(n)\chi(n)&=\frac{1}{2\pi i}\left\{\int_{\eta-iT}^{\eta+iT}+\int_{\eta+iT}^{1+iT}+\int_{1-iT}^{\eta-iT}\right\}F_{j}(s,\chi)\frac{(x+1)^{s}}{s}\textup{d}s+O\left(\frac{x^{1+\epsilon}}{T}\right)\\
	&=:I_{4}+I_{5}+I_{6}+O\left(\frac{x^{1+\epsilon}}{T}\right).
\end{align*}
For the vertical line, we have
\begin{align*}
	I_{4}\ll& \int_{-T}^{T}\Big\vert L(\frac{1}{2}+\epsilon+it,\chi)L(\frac{1}{2}+\epsilon+it,\text{sym}^{2}f\otimes\chi)\prod_{n=2}^{j}L(\frac{1}{2}+\epsilon+it,\text{sym}^{2n}f\otimes\chi)\frac{x^{\frac{1}{2}+\epsilon}}{t}\Big\vert\textup{d}t\\
	\ll& x^{\frac{1}{2}+\epsilon}\max_{1\leq T_{1}\leq T}\Bigg\{ T_{1}^{-1}\left(\int_{T_{1}}^{2T_{1}}\Big\vert L(\frac{1}{2}+\epsilon+it,\chi)\Big\vert^{4}\textup{d}t\right)^{\frac{1}{4}}\left(\int_{T_{1}}^{2T_{1}}\Big\vert L(\frac{1}{2}+\epsilon+it,\text{sym}^{2}f\otimes\chi)\Big\vert^{4}\textup{d}t\right)^{\frac{1}{4}}\\
	&\times\left(\int_{T_{1}}^{2T_{1}}\Big\vert \prod_{n=2}^{j}L(\frac{1}{2}+\epsilon+it,\text{sym}^{2n}f\otimes\chi)\Big\vert^{2}\textup{d}t\right)^{\frac{1}{2}}\Bigg\}+x^{\frac{1}{2}+\epsilon}\\
	\ll& x^{\frac{1}{2}+\epsilon}(qT)^{\frac{1}{4}+\epsilon}(qT)^{\frac{3}{4}+\epsilon}(qT)^{\frac{1}{4}((j+1)^{2}-4)+\epsilon}T^{-1}\\
	\ll&x^{\frac{1}{2}+\epsilon}q^{\frac{1}{4}(j+1)^{2}+\epsilon}T^{\frac{1}{4}(j+1)^{2}-1+\epsilon}.
\end{align*}
For the horizontal segments $I_{5}$ and $I_{6}$, we have
\begin{align*}
	I_{5}+I_{6}&\ll \int_{\frac{1}{2}+\epsilon}^{1+\epsilon}\big\vert L(\sigma+iT,\chi)L(\sigma+iT,\text{sym}^{2}f\otimes\chi)\prod_{n=2}^{j}L(\sigma+iT,\text{sym}^{2n}f\otimes\chi)\big\vert x^{\sigma}T^{-1}\textup{d}\sigma\\
	&\ll \max_{1/2+\epsilon\leq \sigma\leq1+\epsilon}x^{\sigma}(qT)^{(\frac{1}{3}+\frac{67}{46}+\frac{1}{2}((j+1)^{2}-4))(1-\sigma)+\epsilon}T^{-1}\\
	&\ll\max_{1/2+\epsilon\leq \sigma\leq1+\epsilon}x^{\sigma}(qT)^{(\frac{1}{2}(j+1)^{2}-\frac{29}{138})(1-\sigma)+\epsilon}T^{-1}\\
	&\ll\frac{x^{1+\epsilon}}{T}+x^{\frac{1}{2}+\epsilon}q^{\frac{1}{4}(j+1)^{2}-\frac{29}{276}+\epsilon}T^{\frac{1}{4}(j+1)^{2}-\frac{305}{276}+\epsilon}.
\end{align*}
Therefore, we get
\begin{align*}
	\sum_{n\leq x+1,\chi\neq \chi_{0}\atop \chi \textup{ is primitvie}}\lambda_{\textup{sym}^{j}f}^{2}(n)\chi(n)=O\left(\frac{x^{1+\epsilon}}{T}\right)+O(x^{\frac{1}{2}+\epsilon}q^{\frac{1}{4}(j+1)^{2}+\epsilon}T^{\frac{1}{4}(j+1)^{2}-1+\epsilon}).
\end{align*}
Hence, we have
\begin{align*}
	\sum_{n\leq x+1\atop n\equiv 1(q)}\lambda_{\textup{sym}^{j}f}^{2}(n)=c_{f,j}\frac{x}{q}+O\left(\frac{x^{1+\epsilon}}{T\phi(q)}\right)+O(\frac{x^{\frac{1}{2}+\epsilon}q^{\frac{1}{4}(j+1)^{2}+\epsilon}T^{\frac{1}{4}(j+1)^{2}-1+\epsilon}}{\phi(q)}).
\end{align*}
Choosing $T=\frac{x^{\frac{2}{(j+1)^{2}}}}{q}$, we obtain
\[
\sum_{n\leq x+1\atop n\equiv 1(q)}\lambda_{\textup{sym}^{j}f}^{2}(n)=c_{f,j}\frac{x}{q}+O\left(\frac{x^{1-\frac{2}{(j+1)^{2}}+\epsilon}q^{1+\epsilon}}{\phi(q)}\right),
\]
for $q\ll x^{\frac{2}{(j+1)^{2}}-\epsilon}$.
\section{Proof of Theorem \ref{3ci}}
Following the similar arguments as in the proof of Theorem \ref{Fouriersum}, we obtain
\[
\sum_{n\leq x+1\atop n\equiv 1(q)}\lambda_{\textup{sym}^{2}f}^{3}(n)=\mathcal{C}\frac{x}{q}+O\left(\frac{x^{1+\epsilon}}{\phi(q)T}\right)+O\left(\frac{x^{\frac{1}{2}+\epsilon}q^{\frac{617}{92}+\epsilon}T^{\frac{525}{92}+\epsilon}}{\phi(q)}\right).
\]
Taking $T=\frac{x^{\frac{46}{617}}}{q}$, we get
\[
\sum_{n\leq x+1\atop n\equiv 1(q)}\lambda_{\textup{sym}^{2}f}^{3}(n)=\mathcal{C}\frac{x}{q}+O\left(\frac{x^{1-\frac{46}{617}+\epsilon}q^{1+\epsilon}}{\phi(q)}\right),
\]
for $q\ll x^{\frac{46}{617}-\epsilon}$.
\section{Proof of Theorem \ref{shiftsum}}
By the definition of $k$-full kernel function $a(n)$, when we decompose $n\geq 1$ uniquely as $n=q(n)k(n)$, $(k(n),q(n))=1$, where $k(n)$ is $k$-full and $q(n)$ is $k$-free. There holds
\[
a(n)=a(k(n))\ll n^{\epsilon}, \qquad \lambda_{\textup{sym}^{j}f}(n)\ll n^{\epsilon}.
\]
Let $1\leq H\ll x^{\frac{2}{(j+1)^{2}}}$, then
\begin{align}\label{eqfj1}
	&\sum_{n\leq x}a(n)\lambda_{\textup{sym}^{j}f}^{2}(n+1)\notag\\
	&=\sum_{n\leq x\atop k(n)\leq H}a(n)\lambda_{\textup{sym}^{j}f}^{2}(n+1)+\sum_{n\leq x\atop k(n)> H}a(n)\lambda_{\textup{sym}^{j}f}^{2}(n+1)\notag\\
	&=\sum_{n\leq x\atop k(n)\leq H}a(n)\lambda_{\textup{sym}^{j}f}^{2}(n+1)+O\left(\sum_{H<k(n)\leq x}a(k(n))\sum_{q(n)\leq\frac{x}{k(n)}\atop(q(n),k(n))=1}\lambda_{\textup{sym}^{j}f}^{2}(k(n)q(n)+1)\right)\notag\\
	&=\sum_{n\leq x\atop k(n)\leq H}a(n)\lambda_{\textup{sym}^{j}f}^{2}(n+1)+O\left(\sum_{H<k(n)\leq x}(k(n))^{\epsilon}\sum_{q(n)\leq\frac{x}{k(n)}\atop(q(n),k(n))=1}(k(n)q(n))^{\epsilon}\right)\notag\\
	&=\sum_{n\leq x\atop k(n)\leq H}a(n)\lambda_{\textup{sym}^{j}f}^{2}(n+1)+O\left(\sum_{H<k(n)\leq x}(k(n))^{\epsilon}(\frac{x}{k(n)})^{1+\epsilon}\right)\notag\\
	&=\sum_{n\leq x\atop k(n)\leq H}a(n)\lambda_{\textup{sym}^{j}f}^{2}(n+1)+O\left(x^{1+\epsilon}\sum_{H<k(n)\leq x}\frac{1}{k(n)}\right)\notag\\
	&=\sum_{n\leq x\atop k(n)\leq H}a(n)\lambda_{\textup{sym}^{j}f}^{2}(n+1)+O(x^{1+\epsilon}H^{\frac{1}{k}-1}).
\end{align}
Define $g(l)=\sum\limits_{md^{k}=l}\mu(d)$, then $g(q(n))=1$. We have
\begin{align}\label{eqfj*}
	\sum_{n\leq x\atop k(n)\leq H}a(n)\lambda_{\textup{sym}^{j}f}^{2}(n+1)=&\sum_{k(n)\leq H}a(k(n))\sum_{q(n)\leq\frac{x}{k(n)}\atop(q(n),k(n))=1}\lambda_{\textup{sym}^{j}f}^{2}(k(n)q(n)+1)\notag\\
	=&\sum_{k(n)\leq H}a(k(n))\sum_{q(n)\leq\frac{x}{k(n)}\atop(q(n),k(n))=1}g(q(n))\lambda_{\textup{sym}^{j}f}^{2}(k(n)q(n)+1)\notag\\
	=&\sum_{k(n)\leq H}a(k(n))\sum_{q(n)\leq\frac{x}{k(n)}\atop(q(n),k(n))=1}\sum_{m(n)d^{k}(n)=q(n)}\mu(d(n))\lambda_{\textup{sym}^{j}f}^{2}(k(n)q(n)+1)\notag\\
	=&\sum_{k(n)\leq H}a(k(n))\sum_{d(n)\leq(\frac{x}{k(n)})^{\frac{1}{k}}\atop(d(n),k(n))=1}\mu(d(n))\notag\\
	&\times\sum_{m(n)\leq\frac{x}{k(n)d^{k}(n)}\atop(m(n),k(n))=1}\lambda_{\textup{sym}^{j}f}^{2}(k(n)m(n)d^{k}(n)+1)\notag\\
	=&:\textstyle{\sum_{1}^{*}+\sum_{2}^{*}},
\end{align} 
where
\begin{align*}
	{\textstyle\sum_{1}^{*}}=\sum_{k(n)\leq H}a(k(n))\sum_{d(n)\leq H^{\frac{1}{k}}\atop(d(n),k(n))=1}\mu(d(n))\sum_{m(n)\leq\frac{x}{k(n)d^{k}(n)}\atop(m(n),k(n))=1}\lambda_{\textup{sym}^{j}f}^{2}(k(n)m(n)d^{k}(n)+1)
\end{align*}
and
\begin{align*}
	{\textstyle\sum_{2}^{*}}=\sum_{k(n)\leq H}a(k(n))\sum_{H^{\frac{1}{k}}<d(n)\leq(\frac{x}{k(n)})^{\frac{1}{k}}\atop(d(n),k(n))=1}\mu(d(n))\sum_{m(n)\leq\frac{x}{k(n)d^{k}(n)}\atop(m(n),k(n))=1}\lambda_{\textup{sym}^{j}f}^{2}(k(n)m(n)d^{k}(n)+1).
\end{align*}
For $\sum_{2}^{*}$, we have
\begin{align}\label{2*bound}
	{\textstyle\sum_{2}^{*}}&\ll \sum_{k(n)\leq H}(k(n))^{\epsilon}\sum_{d(n)\geq H^{\frac{1}{k}}}\sum_{m(n)\leq\frac{x}{k(n)d^{k}(n)}\atop(m(n),k(n))=1}(k(n)m(n)d^{k}(n))^{\epsilon}\notag\\
	&\ll \sum_{k(n)\leq H}(k(n))^{2\epsilon}\sum_{d(n)\geq H^{\frac{1}{k}}}(d(n))^{k\epsilon}\left(\frac{x}{k(n)d^{k}(n)}\right)^{1+\epsilon}\notag\\
	&\ll x^{1+\epsilon}H^{\frac{1}{k}-1}.
\end{align}
For $\sum_{1}^{*}$, by using $\sum\limits_{\delta(n)\mid(m(n),k(n))}\mu(\delta(n))=1$, we obtain
\begin{align*}
	{\textstyle\sum_{1}^{*}}=&\sum_{k(n)\leq H}a(k(n))\sum_{d(n)\leq H^{\frac{1}{k}}\atop(d(n),k(n))=1}\mu(d(n))\sum_{\delta(n)\mid k(n)}\mu(\delta(n))\\
	&\times\sum_{m_{1}(n)\delta(n)k(n)d^{k}(n)\leq x}\lambda_{\textup{sym}^{j}f}^{2}(k(n)m_{1}(n)\delta(n)d^{k}(n)+1).
\end{align*}
Write
\begin{align*}
	\sum_{m_{1}(n)\delta(n)k(n)d^{k}(n)\leq x}\lambda_{\textup{sym}^{j}f}^{2}(k(n)m_{1}(n)\delta(n)d^{k}(n)+1)=:\sum_{n\leq x+1\atop n\equiv1(k(n)\delta(n)d^{k}(n))}\lambda_{\textup{sym}^{j}f}^{2}(n).
\end{align*}
By Theorem \ref{Fouriersum}, we get $\sum_{1}^{*}=\sum_{1}^{\prime}+\sum_{1}^{\prime\prime}$, where
\begin{align*}
	{\textstyle\sum_{1}^{\prime}}=&\sum_{k(n)\leq H}a(k(n))\sum_{d(n)\leq H^{\frac{1}{k}}\atop(d(n),k(n))=1}\mu(d(n))\sum_{\delta(n)\mid k(n)}\mu(\delta(n))\times c_{f,j}x\frac{1}{(k(n)\delta(n)d^{k}(n))}
\end{align*}
and
\begin{align*}
	{\textstyle\sum_{1}^{\prime\prime}}=O\Bigg(\sum_{k(n)\leq H}a(k(n))&\sum_{d(n)\leq H^{\frac{1}{k}}\atop(d(n),k(n))=1}\mu(d(n))\sum_{\delta(n)\mid k(n)}\mu(\delta(n))\times x^{1-\frac{2}{(j+1)^{2}}+\epsilon}\frac{(k(n)\delta(n)d^{k}(n))^{1+\epsilon}}{\phi(k(n)\delta(n)d^{k}(n))}\Bigg)
\end{align*}
For $\sum_{1}^{\prime\prime}$, we have
\begin{align}\label{1ppbound}
	{\textstyle\sum_{1}^{\prime\prime}}&=O\Bigg(\sum_{k(n)\leq H}a(k(n))\sum_{d(n)\leq H^{\frac{1}{k}}\atop(d(n),k(n))=1}\mu(d(n))\sum_{\delta(n)\mid k(n)}\mu(\delta(n))\times x^{1-\frac{2}{(j+1)^{2}}+\epsilon}\frac{(k(n)\delta(n))^{1+\epsilon}(d^{k}(n))^{1+\epsilon}}{\phi(k(n)\delta(n))\phi(d^{k}(n))}\Bigg)\notag\\
	&\ll x^{1-\frac{2}{(j+1)^{2}}+\epsilon}\sum_{k(n)\leq H}(k(n))^{\epsilon}\sum_{d(n)\leq H^{\frac{1}{k}}\atop(d(n),k(n))=1}\sum_{\delta(n)\mid k(n)}\log\log(k(n)\delta(n))\log\log(d^{k}(n))\notag\\
	&\ll x^{1-\frac{2}{(j+1)^{2}}+\epsilon}\log\log H\log\log H^{2}\sum_{k(n)\leq H}(k(n))^{2\epsilon}H^{\frac{1}{k}}\notag\\
	&\ll x^{1-\frac{2}{(j+1)^{2}}+4\epsilon}H^{\frac{2}{k}}.
\end{align}
Now for $\sum_{1}^{\prime}$, we have
\begin{align}\label{1pbound}
	{\textstyle\sum_{1}^{\prime}}=&c_{f,j}x\sum_{k(n)\leq H}a(k(n))\sum_{d(n)\leq H^{\frac{1}{k}}\atop(d(n),k(n))=1}\mu(d(n))\sum_{\delta(n)\mid k(n)}\mu(d(n))\times\frac{1}{(k(n)\delta(n)d^{k}(n))}\notag\\
	=&c_{f,j}x\sum_{k(n)\leq H}\frac{a(k(n))}{k(n)}\sum_{d(n)\leq H^{\frac{1}{k}}\atop(d(n),k(n))=1}\frac{\mu(d(n))}{d^{k}(n)}\sum_{\delta(n)\mid k(n)}\frac{\mu(\delta(n))}{\delta(n)}\notag\\
	=&c_{f,j}x\sum_{k(n)=1}^{\infty}\frac{a(k(n))}{k(n)}\sum_{\delta(n)\mid k(n)}\frac{\mu(\delta(n))}{\delta(n)}\sum_{d(n)\leq H^{\frac{1}{k}}\atop(d(n),k(n))=1}\frac{\mu(d(n))}{d^{k}(n)}\\
	&+O\Bigg(c_{f,j}x\sum_{k(n)> H}\frac{a(k(n))}{k(n)}\sum_{\delta(n)\mid k(n)}\frac{\mu(\delta(n))}{\delta(n)}\sum_{d(n)\leq H^{\frac{1}{k}}\atop(d(n),k(n))=1}\frac{\mu(d(n))}{d^{k}(n)}
	\Bigg)\notag\\
	=&c_{f,j}x\sum_{k(n)=1}^{\infty}\frac{a(k(n))}{k(n)}\sum_{\delta(n)\mid k(n)}\frac{\mu(\delta(n))}{\delta(n)}\sum_{d(n)=1\atop(d(n),k(n))=1}^{\infty}\frac{\mu(d(n))}{d^{k}(n)}
	\notag\\
	&+O\Bigg(c_{f,j}x\sum_{k(n)> H}\frac{a(k(n))}{k(n)}\sum_{\delta(n)\mid k(n)}\frac{\mu(\delta(n))}{\delta(n)}\sum_{d(n)\leq H^{\frac{1}{k}}\atop(d(n),k(n))=1}\frac{\mu(d(n))}{d^{k}(n)}\Bigg)\notag\\
	&+O\Bigg(c_{f,j}x\sum_{k(n)=1}^{\infty}\frac{a(k(n))}{k(n)}\sum_{\delta(n)\mid k(n)}\frac{\mu(\delta(n))}{\delta(n)}\sum_{d(n)> H^{\frac{1}{k}}}\frac{\mu(d(n))}{d^{k}(n)}\Bigg)\notag\\
	=&:C_{f,j}x+O\left(x^{1+\epsilon}H^{\frac{1}{k}-1}\right).
\end{align}
From (\ref{eqfj1}), (\ref{eqfj*}), (\ref{2*bound}), (\ref{1ppbound}) and (\ref{1pbound}), we have
\begin{align*}
	\sum_{n\leq x}a(n)\lambda_{\textup{sym}^{j}f}^{2}(n+1)=C_{f,j}x+O\left(x^{1+\epsilon}H^{\frac{1}{k}-1}\right)+O\left(x^{1-\frac{2}{(j+1)^{2}}+\epsilon}H^{\frac{2}{k}}\right).
\end{align*}
Note that by Theorem \ref{Fouriersum}, we have $q=\delta(n)k(n)d^{k}(n)$, and $\delta(n)\leq H$, $k(n)\leq H$ and $d^{k}(n)\leq H$. Thus we get $q\leq H^{3}$, furthermore, by Theorem \ref{Fouriersum}, $q$ has to satisfy the condition $q\ll x^{\frac{2}{(j+1)^{2}}-\epsilon}$. This leads us to choose $H$ to be the optimal possible value satisfying $q\leq H^{3}$ and $q\ll x^{\frac{2}{(j+1)^{2}}-\epsilon}$. Thus we choose $H=x^{\frac{2}{3(j+1)^{2}}-\epsilon}$ and we obtain
\begin{align*}
	\sum_{n\leq x}a(n)\lambda_{\textup{sym}^{j}f}^{2}(n+1)=C_{f,j}x+O\left(x^{1-\frac{2k-2}{3(j+1)^{2}k}+\epsilon}\right).
\end{align*}
\section{Proof of Theorem \ref{3cishift}}
Following the similar arguments as in the proof of Theorem \ref{shiftsum}, along with Theorem \ref{3ci}, we get
\[
\sum_{n\leq x}a(n)\lambda_{\textup{sym}^{j}f}^{2}(n+1)=\mathcal{D}x+O\left(x^{1+\epsilon}H^{\frac{1}{k}-1}\right)+O\left(x^{1-\frac{46}{617}+\epsilon}H^{\frac{2}{k}}\right),
\]
and by choosing $H=x^{\frac{46}{1851}-\epsilon}$, we have
\[
\sum_{n\leq x}a(n)\lambda_{\textup{sym}^{2}f}^{3}(n+1)=\mathcal{D}x+O\left(x^{\frac{1805k+46}{1851}+\epsilon}\right).
\]
\section*{Acknowledgments} 
The author would like to thank Prof. Hengcai Tang for his guidance and also anonymous referee for some fruitful comments.

\halign{\small\qquad\quad#\hfill\qquad&\small#\hfill\qquad&\small#\hfill \qquad &
	\small#\hfill \cr
	Youjun Wang                                              \cr
	School of Mathematics and Statistics            \cr
	Henan University                                          \cr
	Kaifeng, Henan 475004                                     \cr
	P. R. China                                               \cr
	\texttt{math$\_$wyj@henu.edu.cn}                               \cr
}
\end{document}